\theoremstyle{plain}
\newtheorem{theorem}{Theorem}[section]
\newtheorem{proposition}[theorem]{Proposition}
\newtheorem{lemma}[theorem]{Lemma}
\newtheorem{corollary}[theorem]{Corollary}
\theoremstyle{definition}
\newtheorem{definition}[theorem]{Definition}
\theoremstyle{remark}
\newtheorem{remark}[theorem]{Remark}
\newcommand{\proofref}[1]{\hyperlink{#1}{proof}}
\title{Threshold-aware Learning to Generate Feasible Solutions for Mixed Integer Programs}
\author{%
  Taehyun Yoon$^{1,5}$\thanks{This work was conducted while the author was affiliated with UNIST.} \quad Jinwon Choi$^2$ \quad Hyokun Yun$^3$\thanks{This work does not relate to his position at Amazon.} \quad Sungbin Lim$^4$ \\
  $^1$KAIST \quad $^2$Kakao Corp \quad $^3$Amazon \quad $^4$Korea University  \quad $^5$OMELET\\
  \texttt{thyoon@kaist.ac.kr} \quad \texttt{royce.won@kakaocorp.com} \\
  \texttt{yunhyoku@amazon.com} \quad \texttt{sungbin@korea.ac.kr}
}
\begin{document}

\maketitle

\begin{abstract}
Finding a high-quality feasible solution to a combinatorial optimization (CO) problem in a limited time is challenging due to its discrete nature.  
Recently, there has been an increasing number of machine learning (ML) methods for addressing CO problems. 
Neural diving (ND) is one of the learning-based approaches to generating partial discrete variable assignments in Mixed Integer Programs (MIP), a framework for modeling CO problems.  
However, a major drawback of ND is a large discrepancy between the ML and MIP objectives, \emph{i.e.}, variable value classification accuracy over primal bound.  
Our study investigates that a specific range of variable assignment rates (\emph{coverage}) yields high-quality feasible solutions, where we suggest optimizing the coverage bridges the gap between the learning and MIP objectives.  
Consequently, we introduce a post-hoc method and a learning-based approach for optimizing the coverage. 
A key idea of our approach is to jointly learn to restrict the coverage search space and to predict the coverage in the learned search space.  
Experimental results demonstrate that learning a deep neural network to estimate the coverage for finding high-quality feasible solutions achieves state-of-the-art performance in NeurIPS ML4CO datasets. 
In particular, our method shows outstanding performance in the workload apportionment dataset, achieving the optimality gap of 0.45\%, a ten-fold improvement over SCIP within the one-minute time limit.
\end{abstract}

\section{Introduction} 

Mixed integer programming (MIP) is a mathematical optimization model to solve combinatorial optimization (CO) problems in diverse areas, including transportation \citep{applegate2003implementing}, finance \citep{mansini2015linear}, communication \citep{das2003minimum}, manufacturing \citep{pochet2006production}, retail \citep{sawik2011scheduling}, and system design \citep{maulik1995integer}.  
A workhorse of the MIP solver is a group of heuristics involving primal heuristics \citep{berthold2006primal, fischetti2005feasibility} and branching heuristics \citep{achterberg2005branching, achterberg2012rounding}.  
Recently, there has been a growing interest in applying data-driven methods to complement heuristic components in the MIP solvers: node selection \citep{he2014learning}, cutting plane \citep{tang2020reinforcement}, branching \citep{khalil2017learning, balcan2018learning, gasse2019exact}, and primal heuristics \citep{nair2020solving, xavier2021learning}.
In particular, finding a high-quality feasible solution in a shorter time is an essential yet challenging task due to its discrete nature. 
\citet{nair2020solving} suggests Neural diving (ND) with a variant of SelectiveNet \citep{geifman2019selectivenet} to jointly learn diving style primal heuristics to generate feasible solutions and adjust a discrete variable selection rate for assignment, referred to as \emph{coverage}.  
ND partially assigns the discrete variable values in the original MIP problem and delegates the remaining sub-MIP problem to the MIP solver.  
However, there are two problems in ND.
First, the ND model that shows higher variable value classification accuracy for the collected MIP solution, defined as the ratio of the number of correctly predicted variable values to the total number of predicted variable values, does not necessarily generate a higher-quality feasible MIP solution.  
We refer to this problem as discrepancy between ML and MIP objectives.  
Secondly, obtaining the appropriate ND coverage that yields a high quality feasible solution necessitates the training of multiple models with varying target coverages, a process that is computationally inefficient.

In this work, we suggest that the coverage is a surrogate measure to bridge the gap between the ML and MIP objectives. 
A key observation is that the MIP solution obtained from the ML model exhibits a sudden shift in both solution feasibility and quality concerning the coverage.
Building on our insight, we propose a simple variable selection strategy called Confidence Filter (CF) to overcome SelectiveNet's training inefficiency problem.  
While primal heuristics rely on LP-relaxation solution values, CF fixes the variables whose corresponding neural network output satisfies a specific cutoff value.
It follows that CF markedly reduces the total ND training cost since it only necessitates a single ND model without SelectiveNet.
At the same time, determining the ideal CF cutoff value with regard to the MIP solution quality can be achieved in linear time in the worst-case scenario concerning the number of discrete variables, given that the MIP model assessment requires a constant time.
To relieve the CF evaluation cost, we devise a learning-based method called Threshold-aware Learning (TaL) based on threshold function theory \citep{bollobas1981threshold}.    
In TaL, we propose a new loss function to estimate the coverage search space in which we optimize the coverage.   
TaL outperforms other methods in various MIP datasets, including NeurIPS Machine Learning for Combinatorial Optimization (ML4CO) datasets \cite{gasse2022machine}.  
To the best of our knowledge, our work is the first attempt to investigate the role of threshold functions in learning to generate MIP solutions. 

Our contributions are as follows.
\begin{enumerate}[label={\arabic*)}]
    \item We devise a new ML framework for data-driven coverage optimization based on threshold function theory.
    \item We empirically demonstrate that our method outperforms the existing methods in various MIP datasets. 
\end{enumerate}

\section{Preliminaries}

\subsection{Notations on Mixed Integer Programming}
Let $n$ be the number of variables, $m$ be the number of linear constraints, $\mathbf{x}=\left[x_{1},\ldots,x_{n}\right]\in\mathbb{R}^{n}$ be the vector of variable values, $\mathbf{c}\in\mathbb{R}^{n}$ be the vector of objective coefficients, $\mathbf{A}\in\mathbb{R}^{m\times n}$ be the linear constraint coefficient matrix, $\mathbf{b}\in\mathbb{R}^{m}$ be the vector of linear constraint upper bound.
Let $\{\mathbf{e}_{i} = \mathbf e_i^{(n)} \in\mathbb{R}^{n}:i=1,\ldots,n\}$ be the Euclidean unit basis vector of $\mathbb{R}^{n}$ and $[n] := \left\{1, 2, \dots, n \right\}$.  
Here we can decompose the variable vector as $\mathbf{x}=\sum_{i\in{B}}x_{i}\mathbf{e}_{i}$.

A Mixed Integer Program (MIP) is a mathematical optimization problem in which the variables are constrained by linear and integrality constraints.
Let $r$ be the number of discrete variables.
We express an MIP problem $M=(\mathbf{A},\mathbf{b},\mathbf{c})$ as follows: 
\begin{align}
\label{formulation:mip}
\min_{\mathbf{x}} \  & \mathbf{c}^{\top} \mathbf{x} \\
\text { subject to } & \mathbf{A} \mathbf{x} \leq \mathbf{b} \nonumber
\end{align}
where $\mathbf{x}=[x_{1},\ldots,x_{n}]\in \mathbb Z^r \times \mathbb R^{n-r}$. 
We say $\mathbf{x}\in\mathbb{Z}^{r}\times\mathbb{R}^{n-r}$ is a \emph{feasible solution} and write $\mathbf{x}\in\mathcal{R}(M)$ if the linear and integrality constraints hold.
We call $\mathbf{x}\in\mathbb{R}^{n}$ an \emph{LP-relaxation solution} and write $\mathbf{x}\in \bar{\mathcal{R}}(M)$ when $\mathbf{x}$ satisfies the linear constraints regardless of the integrality constraints.

\subsection{Neural diving}
Neural diving (ND) \citep{nair2020solving} is a method to learn a generative model that emulates diving-style heuristics to find a high-quality feasible solution.  
ND learns to generate a Bernoulli distribution of a solution's integer variable values in a supervised manner.  
For simplicity of notation, we mean integer variables' solution values when we refer to $\bold x$.  
The conditional distribution over the solution $\bold x$ given an MIP instance $M=(\bold A, \bold b, \bold c)$ is defined as 
$p(\bold x|M)=\frac{\exp (-E(\bold x; M))}{Z(M)}$, where  
$Z(M)$ is the partition function for normalization defined as
$Z(M)=\sum_{x^{\prime}} \exp \left(-E\left(x^{\prime} ; M\right)\right)$  
, and $E(\bold x; M)$ is the energy function over the solution $\bold x$ defined as
\begin{align}
\label{eq:energy}
E(\bold x; M)= 
\begin{cases}
\bold c^\top \bold x & \text { if } \bold x \text { is feasible } \\ \infty & \text { otherwise }
\end{cases}
\end{align}
ND aims to model $p(\bold x|M)$ with a generative neural network $p_\theta(\bold x|M)$ parameterized by $\theta$.  
Assuming conditional independence between variables, ND defines the conditionally independent distribution over the solution as
$p_{\theta}(\bold x|M)=\prod_{d \in \mathcal{I}} p_{\theta}\left(x_{d}|M\right)$, 
where $\mathcal I$ is the set of dimensions of $\bold x$ that belongs to discrete variables and $x_d$ denotes the $d$-th dimension of $\bold x$.
Given that the complete ND output may not always conform to the constraints of the original problem, ND assigns a subset of the predicted variable values and delegates the off-the-shelf MIP solver to finalize the solution.  
In this context, \citet{nair2020solving} employs SelectiveNet \citep{geifman2019selectivenet} to regularize the coverage for the discrete variable assignments in an integrated manner.  
Here, the coverage is defined as the proportion of variables that are predicted versus those that are not predicted.
Regularized by the predefined coverage, the ND model jointly learns to select variables to assign and predict variable values for the selected variables.  

\section{Coverage Optimization}

\paragraph{Confidence Filter}
\label{sec:ct}
We propose Confidence Filter (CF) to control the coverage in place of the SelectiveNet in ND.  
CF yields a substantial enhancement in the efficiency of model training, since it only necessitates the use of a single ND model and eliminates the need for SelectiveNet.
In CF, we adjust the confidence score cutoff value of the ND model output $p_\theta(\bold x|M)$, where the confidence score is defined as $\max(p_\theta(x_d|M), 1 - p_\theta(x_d|M))$ for the corresponding variable $x_d$.  
Suppose the target cutoff value is $\Gamma$, and the coverage is $\rho$. 
Let $s_d$ represent the selection of the $d$-th variable.  
The value $x_d$ is selected to be assigned if $s_d = 1$.  
It follows that $\rho \propto 1/\Gamma$.  
\begin{align}
\label{eq:cf_selection}
s_d = 
\begin{cases}
1 & \text{if } \max(p_\theta(x_d|M), 1 - p_\theta(x_d|M)) \ge \Gamma \\
0 & \text{otherwise}  
\end{cases}
\end{align}

Our hypothesis is that a higher confidence score associated with each variable corresponds to a higher level of certainty in predicting the value for that variable with regard to the solution feasiblility and quality.  

\begin{figure}[hbt!]
  \centering
  \begin{subfigure}{0.5\columnwidth}
  \includegraphics[width=\columnwidth]{./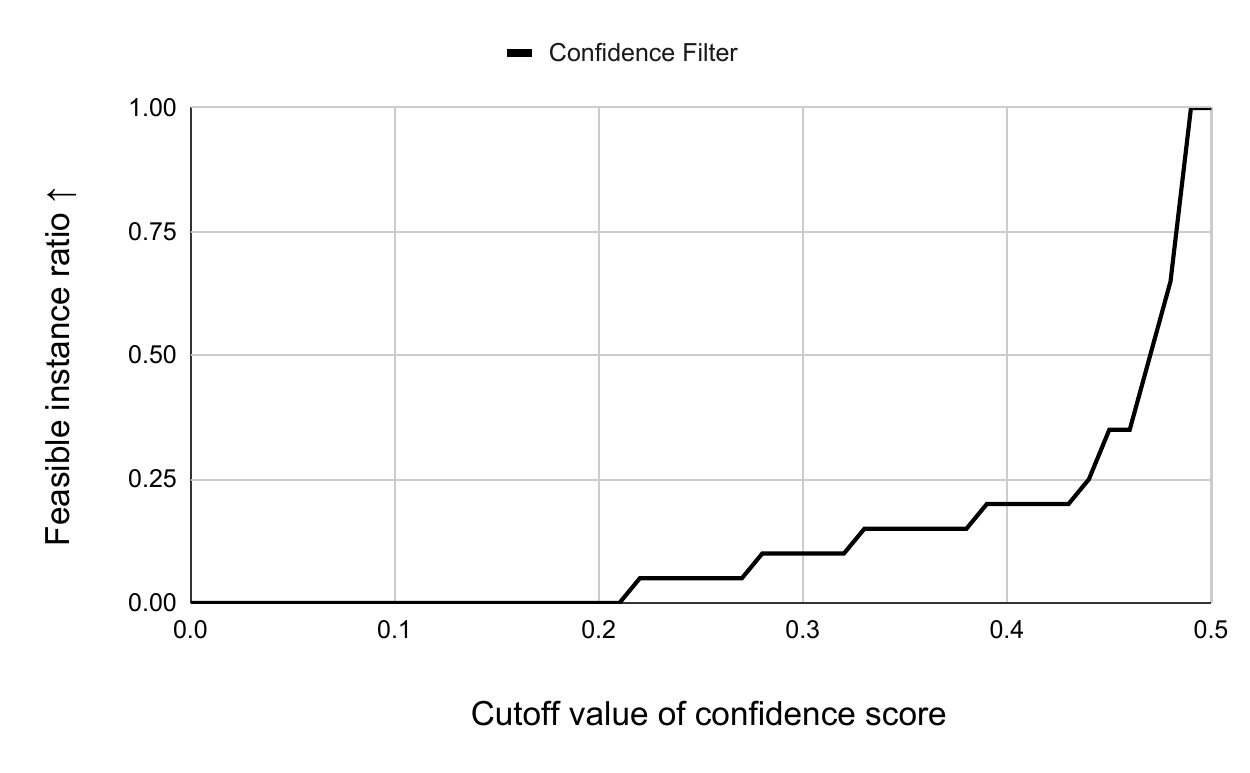}
  \caption{Feasible instance ratio over the confidence score cutoff value.  
  }
  \label{fig:threshold-behavior-feasibility}
  \end{subfigure}%
  \hspace*{\fill}   
  \begin{subfigure}{0.5\columnwidth}
  \includegraphics[width=\columnwidth]{./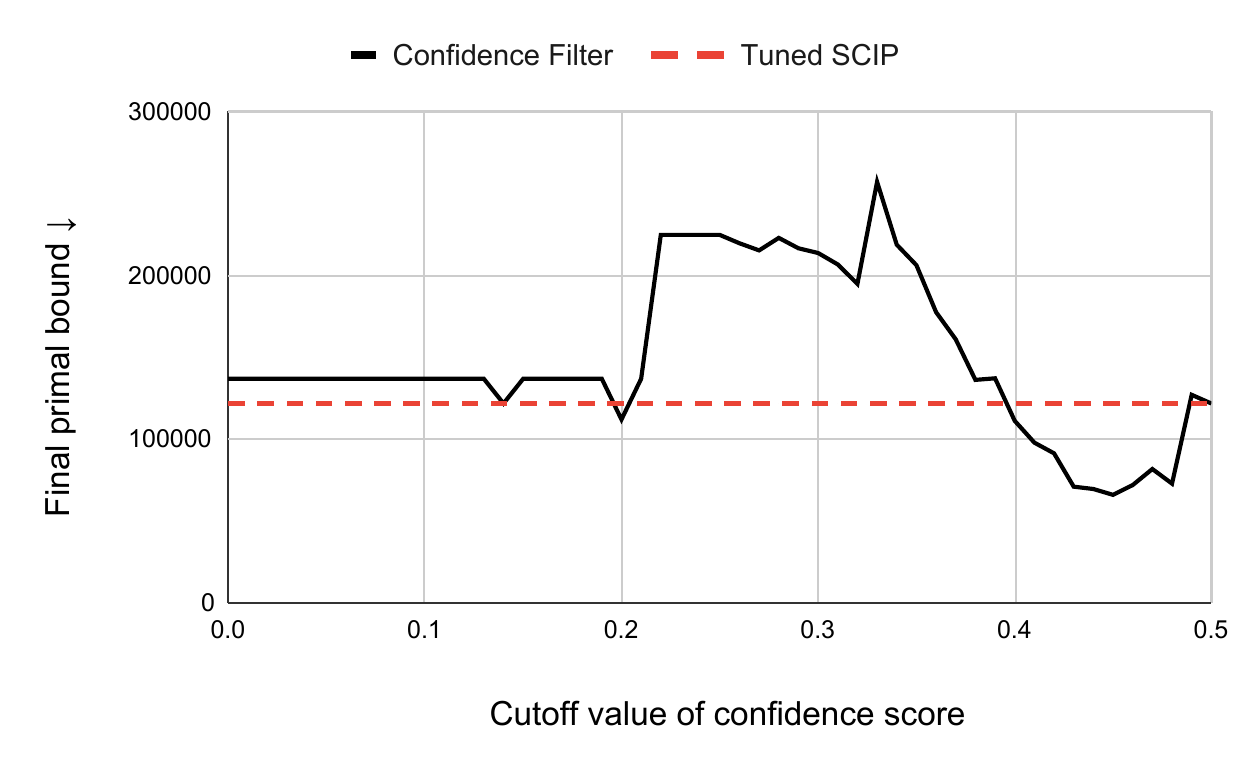}
  \caption{Final primal bound over the confidence score cutoff value.  
  }
  \label{fig:threshold-behavior-optimality}
  \end{subfigure}%
  \caption{
  The feasible instance ratio and the final primal bound over the cutoff value of the Confidence Filter method at the 30-minute time limit.  
  The feasible instance ratio is the number of test instances whose model-generated partial variable assignments are feasible, divided by the total number of test instances; the higher, the better.  
  The final primal bound represents the solution quality; the lower, the better.  
  In both (a) and (b), the feasibility and the solution quality dramatically change at a certain cutoff point.  
  The dotted line in (b) represents the MIP solver (SCIP) performance without Neural diving.  
  In (b), the Confidence Filter outperforms SCIP at the confidence score cutoff value from around $0.4$ to $0.48$.  
  The target dataset is Maritime Inventory Routing problems from \citep{papageorgiou2014mirplib, gasse2022machine}. 
  }
  \label{fig:threshold-behavior}
\end{figure}

As depicted in Figure \ref{fig:threshold-behavior}, the feasibility and quality of the solution undergo significant changes depending on the cutoff value of the confidence score.
Figure \ref{fig:threshold-behavior-feasibility} illustrates that a majority of the partial variable assignments attain feasibility when the confidence score cutoff value falls within the range of approximately $0.95$ to $1$.
As depicted in Figure \ref{fig:threshold-behavior-optimality}, the solutions obtained from the CF partial variable assignments outperform the default MIP solver (SCIP) for cutoff values within the range of approximately $0.9$ to $0.98$.  
Therefore, identifying an appropriate CF cutoff value is crucial for achieving a high-quality feasible solution within a shorter timeframe.

\begin{figure}[hbt!]
  \centering
  \begin{subfigure}{0.44\columnwidth}
  \includegraphics[width=\columnwidth, ]{./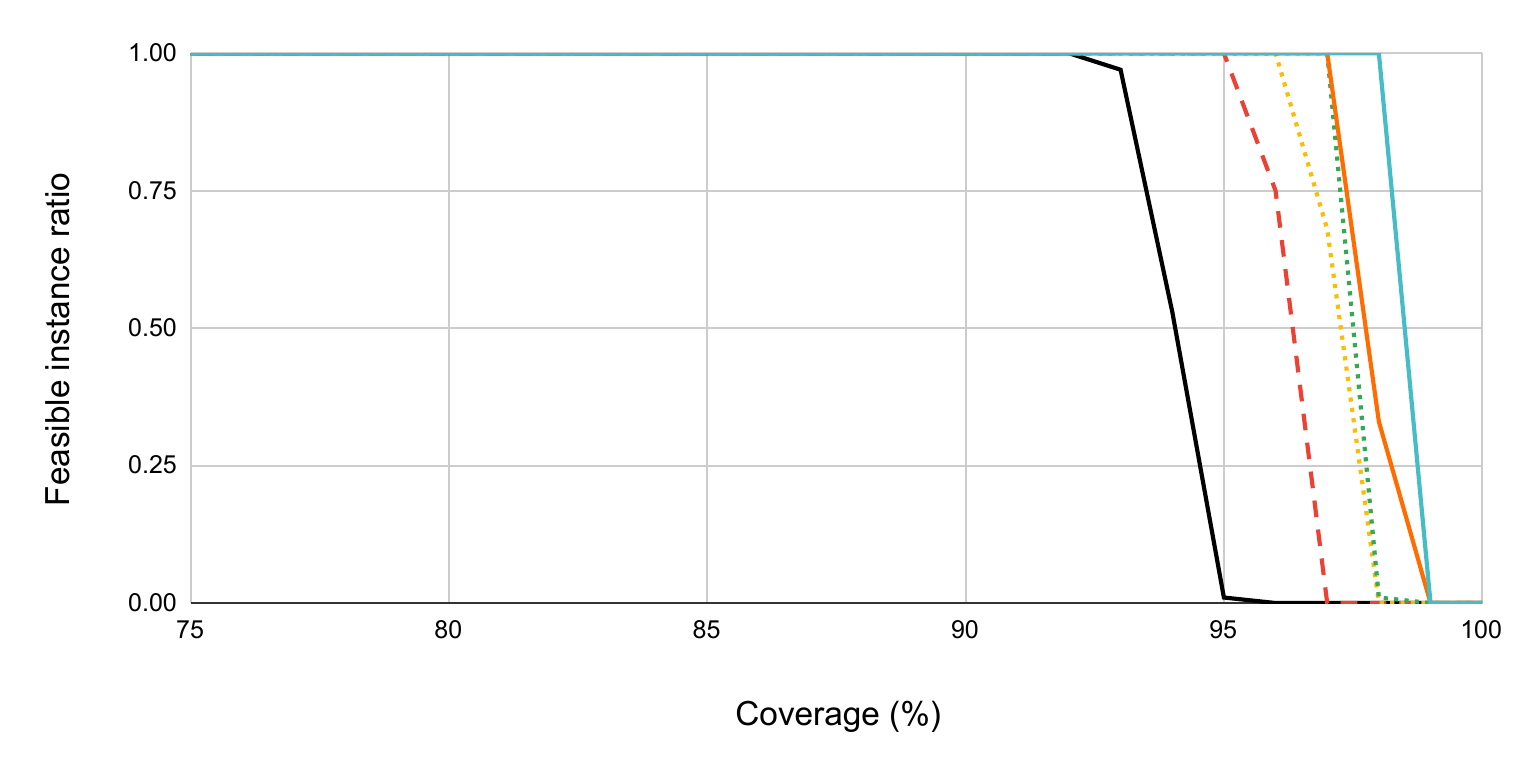}
  \caption{
  Feasible instance ratio over the coverage. 
  }
  \label{fig:scalable_thresholds_feasibility}
  \end{subfigure}%
  \hspace*{\fill}   
  \begin{subfigure}{0.44\columnwidth}
  \includegraphics[width=\columnwidth, ]{./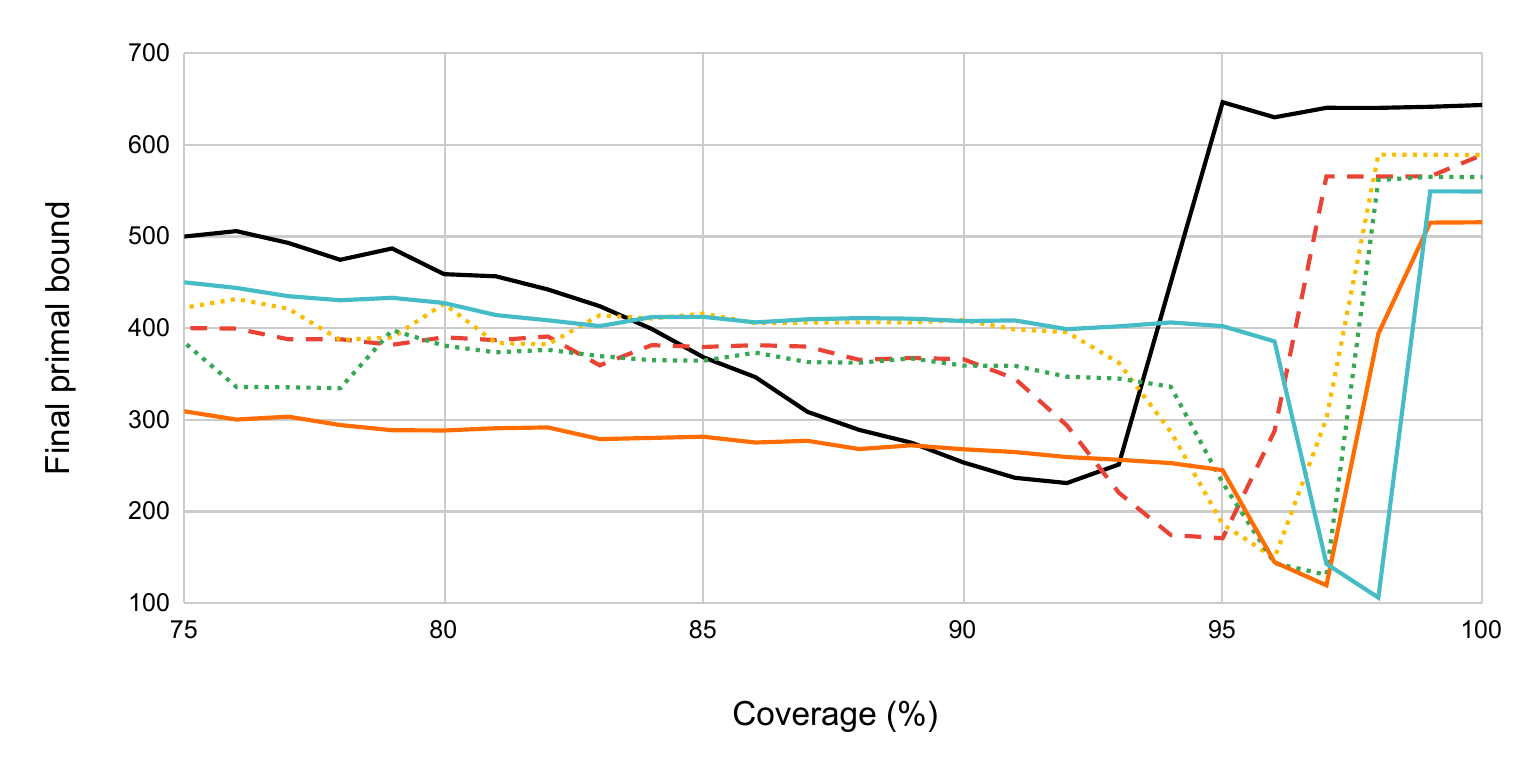}
  \caption{
  Final primal bound over the coverage rate.
  }
  \label{fig:scalable_thresholds_optimality}
  \end{subfigure}%
  \hspace*{\fill}   
  \begin{subfigure}{0.115\columnwidth}
  \vspace{-50pt}
  \includegraphics[width=\columnwidth, valign=t]{./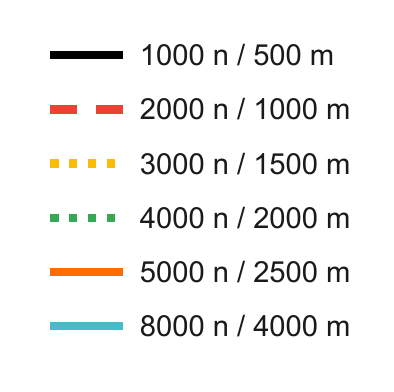}
  \end{subfigure}%
  \caption{
  The threshold behavior of partial variable assignments with respect to coverage is examined in terms of the feasibility and final primal bound in set covering instances.
  We scale up the number of discrete variables $n$, and the number of constraints $m$, by $1 \times$, $2 \times$, $3 \times$, $4 \times$, $5 \times$, and $8 \times$.  
  Each line in the plot corresponds to each problem size.  
  We use the same model (ND without SelectiveNet) trained on the dataset of the smallest scale, $n=1000, m=500$.
  Both (a) and (b) exhibit consistent patterns with respect to the problem size, concerning the coverage where the threshold behavior occurs.
  }
\label{fig:scalable_thresholds}
\end{figure}
\paragraph{Threshold coverage}

 
 Figure \ref{fig:scalable_thresholds} shows an abrupt shift in the solution feasibility and quality within a specific range of coverage.  
 Figure \ref{fig:scalable_thresholds_feasibility} and \ref{fig:scalable_thresholds_optimality} demonstrate that the threshold coverage, representing the coverage at which a sudden shift in feasibility and quality occurs, follows a consistent pattern as the test problem sizes increase.
Given the solution generated by the neural network, Figure \ref{fig:scalable_thresholds_optimality} demonstrates that the optimal coverage defined as $\rho^* = \arg \min \text{PrimalBound}(\rho)$ is bounded by the feasibility threshold coverage in Figure \ref{fig:scalable_thresholds_feasibility}.  
We leverage the consistent pattern in the threshold coverage to estimate the optimal coverage.  


\subsection{Threshold-aware Learning} 
\label{sec:threshold_aware_learning}
 
\paragraph{Overview}
The main objective of Threshold-aware Learning (TaL) is to learn to predict the optimal coverage $\rho^*$ that results in a high-quality feasible solution.   
We leverage the off-the-shelf MIP solver in TaL to use the actual MIP objectives as a criterion to decide the optimal coverage.  
However, optimizing the coverage for large-scale MIP problems is costly since we need to find a feasible solution for each coverage rate at each training step.  
Therefore, we improve the search complexity for estimating $\rho^*$ by restricting the search space to the smaller coverage space that leads to high-quality feasible solutions.   
In TaL, we jointly learn to predict the optimal coverage and to restrict the coverage search space based on the feasibility and the quality of the partial variable assignments induced by the coverage.  
We demonstrate that the coverage is an effective surrogate measure to reduce discrepancies between the ML and MIP objectives.  
We also show that TaL reduces the CF's evaluation cost from linear to constant time.  

\paragraph{Random subset}
We introduce a \emph{random subset} $S(n, \rho)$ as a set-valued random variable.  
Here $n$ represents the index set $[n]$.  
For each element in $[n]$, $\rho = \rho(n) \in [0, 1]$ is the probability that the element is present.  
The selection of each element is statistically independent of the selection of all other elements.  
We call $\rho$ the coverage of the subset.  
Formally, the sample space $\Omega$ for $S(n, \rho)$ is a power set of $[n]$, and the probability measure for each subset $B \in \Omega$ assigns probability 
    $\mathbb P(B) = \rho^l(1 - \rho)^{n - l}$,
where $l$ is the number of elements in $B$.  
Given an MIP solution $\bold x \in \mathbb R^n$, we refer to $S(n, \rho)$ as a realization of the random variable that assigns the variable values of the selected dimensions of $\bold x$.

\paragraph{Learning coverage search space}
We approximate the optimal coverage $\rho^*$ with a graph neural network output $\hat{\rho}_\pi$.  
Also, $\hat{\rho}_\psi$ is a graph neural network output that models the threshold coverage at which the variable assignments of $\bold x$ switch from a feasible to an infeasible state.  
Similarly, $\hat{\rho}_\phi$ is a graph neural network output that estimates the threshold coverage for computing a criterion value of an LP-relaxation solution objective, which can be regarded as a lower bound of the solution quality.  
The LP-relaxation solution objective criterion pertains to a specific condition where the sub-MIP resulting from partial variable assignments is feasible and its LP-relaxation solution objective meets certain value.  
For formal definitions, see Definition \ref{def:feasibility} and \ref{def:lp-relaxation}.  
\begin{align}
\label{eq:threshold_gnn_output}
\hat{\rho}_\psi, \hat{\rho}_\phi, \hat{\rho}_\pi = \texttt{GraphNeuralNet(}M\texttt{)} 
\end{align}
Note that the neural networks $\hat{\rho}_\psi, \hat{\rho}_\phi,$ and $\hat{\rho}_\pi$, share the same ND model weights.  
Let $\hat{\rho}_\Psi^{\text{prob}}$ be a graph neural network output that predicts the probability of the partial variable assignments induced by the coverage $\hat{\rho}_\psi$ being feasible.  
Similarly, $\hat{\rho}_\Phi^{\text{prob}}$ is a graph neural network output that predicts the probability of the partial variable assignments induced by the coverage $\hat{\rho}_\pi$ satisfying the LP-relaxation solution objective criterion.
$\hat{\rho}_\Psi^{\text{prob}}$ and $\hat{\rho}_\Phi^{\text{prob}}$ share the same weights of the ND model.  
\begin{align}
\label{eq:prob_gnn_output}
\hat{\rho}_\Psi^{\text{prob}}, \hat{\rho}_\Phi^{\text{prob}} = \texttt{GraphNeuralNet(}M\texttt{)} 
\end{align}

We define the indicator random variable that represents the state of the variable assignments.  
\begin{align}
I_{S(n, \rho)}^{\text{feas}} = 
\begin{cases}
1, &\text{ if } S(n, \rho) \text{ is feasible} \\
0, &\text{ otherwise}
\end{cases}
I_{S(n, \rho)}^{\text{LP-sat}} = 
\begin{cases}
1, &\text{ if } S(n, \rho) \text{ satisfies the LP relaxation} \\
 &\text{solution objective criterion}\\
0, &\text{ otherwise}
\end{cases}
\end{align}

\paragraph{Loss function}
We propose a loss function to find the optimal coverage more efficiently.  
Let $\text{BCE}(a, b) = -(a \log b + (1 - a) \log (1 - b))$ for $a, b \in (0, 1)$ be the binary cross entropy function.
We implement the loss function using the negative log-likelihood function, where the minimization of the negative log-likelihood is the same as the minimization of binary cross entropy.  
Let $t$ be the iteration step in the optimization loop.  
Let $\bold x(\tau, \rho)$ be the primal solution at time $\tau$ with initial variable assignments of $\bold x$ with coverage $\rho$.  
In practice, we obtain the optimal coverage as
$\rho^* = \arg\min_{\rho \in [\hat{\rho}_\phi, \hat{\rho}_\psi]} \bold c^\top \bold x(\tau, \rho)$.  
To approximate the optimal coverage $\rho^*$ using a GNN model $\hat{\rho}_\pi$ parameterized by $\pi$, we learn the model parameters $\pi$ by minimizing the loss function $\mathcal L_{\text{coverage}}(\pi)$ defined as  
\begin{equation}
\begin{aligned}
\label{eq:loss_cov}
\mathcal L_{\text{coverage}}(\pi) = \lVert \hat{\rho}_\pi - \rho^* \rVert_2^2
\end{aligned} 
\end{equation}  
Since the search cost for the optimal coverage is expensive, we restrict the coverage search space by approximating the threshold coverages induced by the feasibility event $I_{S(n, \hat{\rho}_\psi)}^{\text{feas}}$ and the LP-relaxation solution objective satisfiability event $I_{S(n, \hat{\rho}_\pi)}^{\text{LP-sat}}$.   
To approximate the threshold coverages using GNN models $\hat{\rho}_\psi$ and $\hat{\rho}_\phi$, we learn the model parameters $\psi$ and $\phi$ by minimizing the following loss functions.  
\begin{equation}
\begin{aligned}
\label{eq:loss_threshold}
\mathcal L_{\text{threshold}}(\psi, \phi) = \text{BCE}(\hat{\rho}_\Psi^{{\text{prob}}^{(t - 1)}}, \hat{\rho}_\psi^{(t)}) + \text{BCE}(\hat{\rho}_\Phi^{{\text{prob}}^{(t - 1)}}, \hat{\rho}_\phi^{(t)}),
\end{aligned} 
\end{equation}  

\begin{equation}
\begin{aligned}
\label{eq:loss_value}
\mathcal L_{\text{prob}}( \Psi, \Phi) = \text{BCE}(I_{S(n, \hat{\rho}_\psi)}^{\text{feas}}, \hat{\rho}_\Psi^{\text{prob}}) + \text{BCE}(I_{S(n, \hat{\rho}_\pi)}^{\text{LP-sat}}, \hat{\rho}_\Phi^{\text{prob}})
\end{aligned} 
\end{equation}  

Minimizing $\mathcal L_{\text{threshold}}$ and $\mathcal L_{\text{prob}}$ imply that $\hat{\rho}_\psi$ and $\hat{\rho}_\phi$ converge to the threshold coverages induced by the indicator random variables $I_{S(n, \hat{\rho}_\psi)}^{\text{feas}}$ and $I_{S(n, \hat{\rho}_\pi)}^{\text{LP-sat}}$, respectively.  
Our final objective is to minimize the total loss $\mathcal L$, which comprises the three components.  
\begin{equation}
\begin{aligned}
\label{eq:loss_total}
\mathcal L = \sum\limits_{t}(\mathcal L_{\text{coverage}} + \mathcal L_{\text{threshold}} + \mathcal L_{\text{prob}}) 
\end{aligned} 
\end{equation}  
Note that the loss function in equation (\ref{eq:loss_total}) is fully differentiable.  

\paragraph{Coverage-probability mappings}
In equation (\ref{eq:loss_threshold}) and (\ref{eq:loss_value}), $\hat{\rho}_\Psi^{{\text{prob}}}$ is an intermediary between $\hat{\rho}_\psi$ and $I_{S(n, \hat{\rho}_\psi)}^{\text{feas}}$.  
Similarly, $\hat{\rho}_\Phi^{{\text{prob}}}$ buffers between $\hat{\rho}_\phi$ and $I_{S(n, \hat{\rho}_\pi)}^{\text{LP-sat}}$.   
In practice, $\hat{\rho}_\Psi^{\text{prob}}$ and $\hat{\rho}_\Phi^{\text{prob}}$ prevent abrupt update of $\hat{\rho}_\psi$ and $\hat{\rho}_\phi$, depending on $I_{S(n, \hat{\rho}_\psi)}^{\text{feas}} \in \{0, 1\}$ and $I_{S(n, \hat{\rho}_\pi)}^{\text{LP-sat}} \in \{0, 1\}$, respectively.  
Let ${\rho}_\Psi^{\text{prob}}$ be a ground-truth probability that the partial variable assignments $S(n, \rho_\psi)$ is feasible.  
Let ${\rho}_\Phi^{\text{prob}}$ be a ground-truth probability that the partial variable assignments $S(n, \rho_\phi)$ satisfies the LP-relaxation solution objective criterion.  
If $I_{S(n, \hat{\rho}_\psi)}^{\text{feas}}$ and $I_{S(n, \hat{\rho}_\pi)}^{\text{LP-sat}}$ are non-trivial, there always exist ${\rho}_\psi \in (0, 1)$ and ${\rho}_\phi \in (0, 1)$, such that ${\rho}_\psi = {\rho}_\Psi^{\text{prob}}$ and ${\rho}_\phi = {\rho}_\Phi^{\text{prob}}$, where $\rho_\psi$ and $\rho_\phi$ are the threshold coverages.  
Also, if the value of the indicator random variables $I_{S(n, \rho_\psi)}^{\text{feas}}$ and $I_{S(n, \rho_\pi)}^{\text{LP-sat}}$ abrubtly shift based on $\rho_\psi$ and $\rho_\pi$, the threshold coverages that correspond to their corresponding probabilities, such that ${\rho}_\psi = {\rho}_\Psi^{\text{prob}}$ and ${\rho}_\phi = {\rho}_\Phi^{\text{prob}}$, are justified.  

\paragraph{Algorithm}
Algorithm \ref{alg:threshold-aware-learning} describes the overall procedure of TaL.  
We assume that there is a trained model $p_\theta$ to predict the discrete variable values.  
In Algorithm \ref{alg:threshold-aware-learning}, ThresholdAwareGNN at line \ref{line:tagnn} extends the model $p_\theta$ to learn $\hat{\rho}_\psi, \hat{\rho}_\phi, \hat{\rho}_\pi$ in equation (\ref{eq:threshold_gnn_output}).  
ThresholdAwareProbGNN at line \ref{line:tapgnn} extends the model $p_\theta$ to learn $\hat{\rho}_\Psi^{\text{prob}}, \hat{\rho}_\Phi^{\text{prob}}$ in equation (\ref{eq:prob_gnn_output}).  
ThresholdSolve at line \ref{line:threshold_solve} computes the feasibility of the variable assignments from $S(n, \hat{\rho}_\psi)$, LP-satisfiability induced from $S(n, \hat{\rho}_\pi)$, and the high-quality feasible solution coverage $\rho^*$, using the MIP solver (See Algorithm \ref{alg:threshold-solve} in Appendix \ref{apdx:algo} for detail).  
Note that we do not update the weight parameters of $p_\theta$ in ThresholdAwareGNN and ThresholdAwareProbGNN.  

\begin{algorithm}[hbt!]
\caption{ThresholdAwareLearning\label{alg:threshold-aware-learning}}
\begin{small}
\hspace*{\algorithmicindent} \textbf{Input: } 
Batch size $H$, the number of iterations $T$, ND model $p_\theta$
\begin{algorithmic}[1]
\STATE ThresholdAwareGNN parameterized by $\psi, \phi, \pi$.  
\STATE ThresholdAwareProbGNN parameterized by $\Psi, \Phi$.  
\STATE $i \gets 0$
\FOR{$i \leq T$}
    \FOR{$j \leq H$} 
        \STATE $\bold x, \hat{\rho}_\psi, \hat{\rho}_\phi, \hat{\rho}_\pi \gets \text{ThresholdAwareGNN}(M)$ \label{line:tagnn}
        \STATE $\hat{\rho}_\Psi^{\text{prob}}, \hat{\rho}_\Phi^{\text{prob}} \gets \text{ThresholdAwareProbGNN}(M, \hat{\rho}_\psi, \hat{\rho}_\phi,\hat{\rho}_\pi)$ \label{line:tapgnn}
        \label{line:threshold_solve} \STATE MIPOutput $\gets \text{ThresholdSolve}(M, \bold x, \hat{\rho}_\psi, \hat{\rho}_\phi, \hat{\rho}_\pi)$ 
        \STATE $I_{S(n, \hat{\rho}_\psi)}^{\text{feas}}, I_{S(n, \hat{\rho}_\pi)}^{\text{LP-sat}}, \rho^* \gets \text{MIPOutput}$ 
        \IF{$\rho^*$ is Null}
            \STATE \textbf{break}
        \ENDIF
        \STATE Compute $\mathcal L_{\text{coverage}}, \mathcal L_{\text{threshold}}, \mathcal L_{\text{prob}}$ by eq. \eqref{eq:loss_cov}, \eqref{eq:loss_threshold}, \eqref{eq:loss_value}
        \STATE $\boldsymbol{g} \gets \nabla_{\pi} \mathcal L_{\text{coverage}}, \nabla_{\psi, \phi} \mathcal L_{\text{threshold}}, \nabla_{\Psi, \Phi} \mathcal L_{\text{prob}}$
        \STATE $\boldsymbol{g}_{\text{coverage}}, \boldsymbol{g}_{\text{threshold}}, \boldsymbol{g}_{\text{prob}} \gets \boldsymbol{g}$
        \STATE Update $\pi, (\psi, \phi), (\Psi, \Phi)$ by gradient descent with $\boldsymbol{g}_{\text{coverage}}, \boldsymbol{g}_{\text{threshold}}, \boldsymbol{g}_{\text{prob}}$
        \STATE $ j \gets j + 1$
    \ENDFOR
    \STATE $ i \gets i + 1$
\ENDFOR
\STATE \textbf{return } $\text{ThresholdAwareGNN}$
\end{algorithmic}
\end{small}
\end{algorithm}

\subsection{Theoretical Results on Coverage Optimization in Threshold-aware Learning}
In this section, we aim to justify our method by formulating the feasibility property $\mathcal P$, LP-relaxation satisfiability property $\mathcal Q_\kappa$, and their intersection property $\mathcal F_\kappa$.  
Specifically, we show that there exist two local threshold functions of $\mathcal F_\kappa$ that restrict the search space for the optimal coverage $\rho^*$.  
Finally, we suggest that finding the optimal coverage shows an improved time complexity at the global optimum of $\mathcal L_{\text{threshold}}$.  

\paragraph{Threshold functions in MIP variable assignments}
We follow the convention in \citep{bollobas1987threshold}.  
See Appendix \ref{sec:apdx} for formal definitions and statements. 
We leverage \citet[Theorem 4]{bollobas1987threshold}, which states that every monotone increasing non-trivial property has a threshold function.  
Given an MIP problem $M=(\mathbf{A},\mathbf{b},\mathbf{c})$, we compute $\mathbf{x}$ from the discrete variable value prediction model $p_\theta$.  
We denote $a \ll b$ or $a(n) \ll b(n)$ when we mean $\lim \limits_{n \to \infty} a(n) / b(n) = 0$.  

\paragraph{Feasibility of partial variable assignments}
We formulate the feasibility property to show that the property has a threshold function.  
The threshold function in the feasibility property is a criterion that depends on both the learned model and the input problem, used to determine a coverage for generating a feasible solution.  
We assume $M$ has a feasible solution and $\mathbf x$ is not feasible.  

We denote $S(n, \rho) \in \mathcal P$ if there exists a feasible solution that contains the corresponding variable values in $S(n, \rho) \subset [n]$.  
If $\mathcal P$ is a nontrivial monotone decreasing property, we can define threshold function $p_0$ of $\mathcal P$ as  
\begin{gather}
\mathbb{P}(S(n, \rho) \in \mathcal{P}) \rightarrow
\begin{cases}1 & \text { if } \rho \ll p_0 \\  
0 & \text { if } \rho \gg p_0
\end{cases}
\end{gather}  

Here, $\mathcal P$ is monotone decreasing.  
Therefore, $\mathcal P$ has a threshold function $p_0$ by Theorem \ref{thm:bollobas} and Lemma \ref{lemma:p} in Appendix \ref{apdx:theory}.

\paragraph{LP-relaxation satisfiability of partial variable assignments}
We formulate the LP-relaxation satisfiability property to show there exists a local threshold function. 
Given the learned model, we set $\kappa$ to represent the lower bound of the sub-MIP's primal bound.   
The local threshold function is a criterion depends on the learned model and input problem, used to determine a coverage for generating a feasible solution with an objective value of at least $\kappa$.

Let $\hat{\bold x}(\rho)$ be the LP-relaxation solution values after assigning variable values in $S(n, \rho)$.  
We denote $S(n, \rho)\in \mathcal Q_{\kappa}$ if $\hat{\bold x}(\rho)$ is feasible and $\bold c^\top \hat{\bold x}(\rho)$ is at least $\kappa$.  
We define the local threshold function $q_{0, \kappa}$ as
\begin{gather}
\mathbb{P}(S(n, \rho) \in \mathcal{Q}_{\kappa}) \rightarrow
\begin{cases}0 & \text { if } \rho \ll q_{0, \kappa} \ll i\\
1 & \text { if } i \gg \rho \gg q_{0, \kappa}
\end{cases}
\end{gather}
Here, $i$ is the upper bound of $\rho$.  
We assume $M$ has a feasible solution, and $\mathbf x$ is not feasible.  
Also, we assume LP-feasible partial variable assignments exist, such that $\mathcal Q_\kappa$ is nontrivial.  

If $\mathcal P$ is nontrivial and $\kappa$ is fixed, then $\mathcal Q_{\kappa}$ is bounded monotone increasing.  
By the assumption, $\mathcal P$ is nontrivial.  
$\mathcal Q_\kappa$ has a local threshold function by Theorem \ref{thm:bounded-increasing} and Lemma \ref{lemma:q} in Appendix \ref{apdx:theory}.  

\paragraph{Feasibility and LP-relaxation satisfiability of partial variable assignments}
\label{feasible_optimality}

We formulate the property $\mathcal{F}_\kappa$ as an intersection of the property $\mathcal P$ and $\mathcal Q_\kappa$.  
Theorem \ref{thm:f} in Appendix \ref{apdx:theory} states that $\mathcal F_\kappa$ has two local thresholds $p_a$, and $p_b$, such that $p_a \ll \rho \ll p_b$ implies that $S(n, \rho)$ satisfies the MIP feasibility and the LP-relaxation objective criterion.   

Formally, given that $\mathcal P$ is nontrivial monotone decreasing and $\mathcal{Q}_{\kappa}$ is nontrivial bounded monotone increasing,
\begin{equation}
\mathcal F_{\kappa} := \mathcal P \cap \mathcal Q_{\kappa}  
\end{equation}  

For some property $\mathcal{H}_n$, a family of subsets $\mathcal{H}_n$ is convex if $B_{a} \subseteq B_{b} \subseteq B_{c}$ and $B_{a}, B_{c} \in \mathcal{H}_n$ imply $B_{b} \in \mathcal{H}_n$.  
If we bound the domain of $\mathcal{F}_{\kappa}$ with $p_0$, such that $\rho \in (0, p_0)$ in $S(n, \rho)$ for $\mathcal F_\kappa$, then $\mathcal F_\kappa$ is convex in the domain, since the intersection of an increasing and a decreasing property is a convex property \citep{janson2011random}.  
We define $\Delta$-optimal solution $\bold x_\Delta \in \mathbb Z^n$, such that $\bold c^\top \bold x_\Delta \le \bold c^\top \bold x^\star + \Delta$. 
Let $\Delta$-optimal coverage $\rho^*_\Delta$ be the coverage, such that $\bold c^\top \bold x(\tau, \rho^*_\Delta) \le \bold c^\top \bold x_\Delta$.  
If $B(n) \in \mathcal{F}_{\kappa^*}$, we assume that the variable values in $B(n) \subset [n]$ lead to $\Delta$-optimal solution given $\bold x$.  
Let $\mathcal A$ be the original search space to find the $\Delta$-optimal coverage $\rho^*_\Delta$.  
Let $t$ be the complexity to find the optimal point as a function of search space size $|\mathcal A|$, such that the complexity to find the optimal coverage is $O(t(|\mathcal A|))$.   
Let $\kappa^* := \max \kappa \text{ subject to } \bold c^\top \hat{\bold x}(\rho) = \kappa$ and $\mathbb{P}(S(n, \rho) \in \mathcal{Q}_{\kappa}) = \xi$ for some $\rho \in (0, 1)$. 
We assume $\kappa^*$ is given as ground truth in theory, while we adjust $\kappa$ algorithmically in practice.  
We refer to $p_1(n) \lesssim p_2(n)$ if there exists a positive constant $C$ independent of $n$, such that $p_1(n) \le C p_2(n)$, for $n > 1/\epsilon$.  
We assume $q_{0, \kappa} \lesssim p_0$.  

We show that TaL reduces the complexity to find the $\Delta$-optimal coverage $\rho^*_\Delta$ by restricting the search space, such that $\rho^*_\Delta \in (q_{0, \kappa^*}, p_0)$, if the test dataset and the training dataset is \emph{i.i.d.} and the time budget to find the feasible solution is sufficient in the test phase.  
\begin{theorem}
\label{thm:restricted_search_space}
If the variable values in $S(n, \rho) \in \mathcal{F}_{\kappa^*}$ lead to a $\Delta$-optimal solution, 
then the complexity of finding the $\Delta$-optimal coverage is $O(t((p_0 - q_{0, \kappa^*})|\mathcal A|))$ at the global optimum of $\mathcal L_{\text{threshold}}$.   
\end{theorem}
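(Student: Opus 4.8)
The plan is to show that driving $\mathcal{L}_{\text{threshold}}$ to its global minimum forces the learned search bounds $\hat{\rho}_\phi$ and $\hat{\rho}_\psi$ onto the true threshold coverages $q_{0,\kappa^*}$ and $p_0$, so that the $\arg\min$ over $\rho$ is carried out on the shrunken interval $[q_{0,\kappa^*}, p_0]$ rather than on all of $\mathcal{A}$; the cost bound then follows by substituting the reduced search-space size into the cost function $t$. I would split the argument into a \emph{containment} step (the $\Delta$-optimal coverage still lives in the reduced interval) and a \emph{convergence} step (the learned bounds recover the thresholds at the global optimum), and then combine them.

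First I would establish containment, namely $\rho^*_\Delta \in (q_{0,\kappa^*}, p_0)$. By hypothesis the variable values selected by $S(n,\rho) \in \mathcal{F}_{\kappa^*}$ yield a $\Delta$-optimal solution, so it suffices to locate where $\mathcal{F}_{\kappa^*}$ holds with high probability. Combining the threshold behavior of $\mathcal{P}$ (feasible with high probability when $\rho \ll p_0$) and of $\mathcal{Q}_{\kappa^*}$ (LP-satisfiable with high probability when $\rho \gg q_{0,\kappa^*}$) with the assumption $q_{0,\kappa^*} \lesssim p_0$, Theorem \ref{thm:f} gives two local thresholds $p_a, p_b$ bracketing the region on which $\mathcal{F}_{\kappa^*} = \mathcal{P} \cap \mathcal{Q}_{\kappa^*}$ holds, and the convexity of $\mathcal{F}_{\kappa^*}$ on $(0, p_0)$ identifies this region as the interval $(q_{0,\kappa^*}, p_0)$. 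Hence any coverage producing a $\Delta$-optimal assignment, in particular $\rho^*_\Delta$, must lie in $(q_{0,\kappa^*}, p_0)$, so restricting the search to this interval does not discard the optimum.

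Next I would connect the global optimum of $\mathcal{L}_{\text{threshold}}$ to the recovery of the thresholds. Using the coverage-probability mappings, the existence of $\rho_\psi, \rho_\phi \in (0,1)$ with $\rho_\psi = \rho_\Psi^{\text{prob}}$ and $\rho_\phi = \rho_\Phi^{\text{prob}}$ pins down the fixed points at which the individual BCE terms are minimized; since $\mathcal{L}_{\text{threshold}}$ is a sum of such terms, at its global minimum $\hat{\rho}_\psi$ and $\hat{\rho}_\phi$ coincide with the feasibility threshold $p_0$ and the LP-satisfiability threshold $q_{0,\kappa^*}$, respectively. Consequently the learned search interval $[\hat{\rho}_\phi, \hat{\rho}_\psi]$ appearing in $\rho^* = \arg\min_{\rho \in [\hat{\rho}_\phi, \hat{\rho}_\psi]} \mathbf{c}^\top \mathbf{x}(\tau, \rho)$ collapses to $[q_{0,\kappa^*}, p_0]$, whose length is $p_0 - q_{0,\kappa^*}$. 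Combining the two steps, the original enumeration of $|\mathcal{A}|$ candidate coverages at cost $O(t(|\mathcal{A}|))$ is replaced by a search over the fraction $(p_0 - q_{0,\kappa^*})$ of them, i.e. $(p_0 - q_{0,\kappa^*})|\mathcal{A}|$ candidates, while the containment step guarantees $\rho^*_\Delta$ is retained; substituting this reduced size into $t$ yields the claimed $O(t((p_0 - q_{0,\kappa^*})|\mathcal{A}|))$.

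The main obstacle I anticipate is the convergence step: threshold functions are asymptotic ($n \to \infty$) objects, so I must argue that at the global optimum the finite-sample learned bounds genuinely align with $p_0$ and $q_{0,\kappa^*}$ rather than merely lying near them. This requires the indicator random variables $I^{\text{feas}}_{S(n,\hat{\rho}_\psi)}$ and $I^{\text{LP-sat}}_{S(n,\hat{\rho}_\pi)}$ to be non-trivial enough that the coverage-probability fixed points are \emph{uniquely} determined — precisely the non-degeneracy condition under which the mappings $\rho_\psi = \rho_\Psi^{\text{prob}}$ and $\rho_\phi = \rho_\Phi^{\text{prob}}$ were justified — and it is where the i.i.d.\ train/test and sufficient-time-budget assumptions must be invoked to transfer the learned thresholds to the test-phase search.
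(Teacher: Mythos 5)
Your overall decomposition---a containment step confining the search to $(q_{0,\kappa^*},\,p_0)$, plus a convergence step showing $\hat{\rho}_\psi$ and $\hat{\rho}_\phi$ recover $p_0$ and $q_{0,\kappa^*}$ at the global optimum of $\mathcal L_{\text{threshold}}$---is exactly the paper's structure, and your convergence step is essentially the paper's Proposition~\ref{prop:global_optimum_psi} and Corollary~\ref{cor:global_optimum_phi}: the BCE terms are minimized at the coverage-probability fixed points, which are then identified with the threshold functions at level $\xi$.

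However, your containment step has a genuine gap: it affirms the converse of the hypothesis. The theorem assumes that membership $S(n,\rho) \in \mathcal F_{\kappa^*}$ implies $\Delta$-optimality; it does not assume that every $\Delta$-optimal coverage arises from $\mathcal F_{\kappa^*}$. So your inference ``any coverage producing a $\Delta$-optimal assignment, in particular $\rho^*_\Delta$, must lie in $(q_{0,\kappa^*},\,p_0)$'' does not follow, and is in fact false in general: with a sufficient time budget, the coverage $\rho = 0$ (fix nothing and let the solver do all the work) also yields a $\Delta$-optimal solution, yet lies outside the interval. The paper's proof circumvents precisely this by adopting the convention $\rho^*_\Delta = \max\left(\arg\min_\rho \mathbf{c}^\top \mathbf{x}(\tau,\rho)\right)$ and arguing by contradiction in both directions: if $\rho^*_\Delta \gtrsim p_0$, then $S(n,\rho^*_\Delta) \notin \mathcal P$ with high probability, contradicting minimality; if $\rho^*_\Delta \lesssim q_{0,\kappa^*}$, then---using the hypothesis only in the valid direction, namely that coverages inside the interval are in $\mathcal F_{\kappa^*}$ with high probability and hence $\Delta$-optimal---there exists $\rho' > \rho^*_\Delta$ achieving an objective at least as good, contradicting maximality. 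Alternatively, you could repair your step by weakening it to a sufficiency claim: the interval contains, with high probability, a $\Delta$-optimal coverage, so restricting the search to it loses nothing; this weaker statement is all the complexity bound actually requires. Either fix is needed before your argument goes through.
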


\begin{proof}[Proof Sketch]
First, we prove the following statement.  
If the variable values in $S(n, \rho) \in \mathcal{F}_{\kappa^*}$ lead to $\Delta$-optimal solution and $\rho^*_\Delta = \arg\min\limits_\rho \bold c^\top \bold x(\tau, \rho)$, then 
\begin{align}
q_{0, \kappa^*} \lesssim \rho^*_\Delta \lesssim p_0
\end{align}
Finally, we show that $\hat{\rho}_\psi \approx p_0$ and $\hat{\rho}_\phi \approx q_{0, \kappa^*}$ at the global optimum of $\mathcal L_{\text{threshold}}$.  
See the full \proofref{thm:restricted_search_space} in the Appendix \ref{apdx:theory}.  
\end{proof}

\paragraph{Connection to the methodology}
At line 8 in Algorithm \ref{alg:threshold-aware-learning} (ThresholdSolve procedure; details in Appendix \ref{apdx:algo}), we search for the $\Delta$-optimal coverage using derivative-free optimization (DFO), such as Bayesian optimization or ternary search in the search space restricted into $(\hat{\rho}_\psi, \hat{\rho}_\phi)$.  
Our GNN outputs converge to the local threshold functions, such that $\hat{\rho}_\psi \approx p_0$ and $\hat{\rho}_\phi \approx q_{0, \kappa^*}$, at the global minimum of the loss function in (\ref{eq:loss_threshold}) after iterating over the inner loop of Algorithm \ref{alg:threshold-aware-learning}.  
It follows that the search space for $\rho^*_\Delta$ becomes $(q_{0, \kappa^*}, p_0)$ at the global minimum of the loss function in (\ref{eq:loss_threshold}), by Theorem \ref{thm:restricted_search_space}.  
By Theorem \ref{thm:restricted_search_space} and our assumption, the cardinality of the search space to find $\rho^*_\Delta$ becomes $(p_0 - q_{0, \kappa^*})|\mathcal A|$.   
Therefore, the post-training complexity of TaL is $O((p_0 - q_{0, \kappa^*})n)$ in Table \ref{result:complexity_comparison}.  

\paragraph{Time complexity}
We assume that there are $n$ discrete variables present in each MIP instance within both the training and test datasets.
For simplicity, we denote that the cost of training or evaluating a single model is $O(1)$.
In the worst-case scenario, ND requires training $n$ models and evaluating all $n$ trained models to find the optimal coverage.
Therefore, the training and evaluation of ND require a time complexity of $O(n)$, where training is computationally intensive on GPUs, and evaluation is computationally intensive on CPUs.  
By excluding the SelectiveNet from ND, CF remarkably improves the training complexity of ND from $O(n)$ to $O(1)$.

\begin{table}[hbt!]
  \caption{Worst-case computational complexity comparison over the methods}
  \label{result:complexity_comparison}
  \resizebox{\linewidth}{!}{
  \centering
  \begin{tabular}{lccc}
    \toprule
        & \multicolumn{1}{c}{Training (GPU $\uparrow$)} & \multicolumn{1}{c}{Post-training (CPU $\uparrow$, GPU $\downarrow$)} & \multicolumn{1}{c}{Evaluation (CPU $\uparrow$)} \\
    \midrule
    ND (Nair et al.)               & $O(n)$ & - & $O(n)$ \\
    CF (Ours)       & $O(1)$ & - & $O(n)$ \\
    TaL (Ours)   & $O(1)$ & $O((p_0 - q_{0, \kappa^*})n)$ & $O(1)$ \\
    \bottomrule
  \end{tabular}
  }
\end{table}

In Table \ref{result:complexity_comparison}, CF still requires $O(n)$ time for evaluation in order to determine the optimal cutoff value for the confidence score.
TaL utilizes a post-training process as an intermediate step between training and evaluation, thereby reducing the evaluation cost to $O(1)$.
For simplicity, we denote $O(1)$ for the MIP solution verification and neural network weight update for each coverage in post-training.  

\section{Other Related Work}  
There have been SL approaches to accelerate the primal solution process \citep{xavier2021learning, nair2020solving, sonnerat2021learning, shen2021learning, ding2020accelerating, khalil2022mip}.  
Empirically, these methods outperform the conventional solver-tuning approaches.  
Meanwhile, reinforcement learning (RL) is one of the mainstream frameworks for solving CO problems \citep{khalil2017learning, kool2018attention, kwon2020pomo, barrett2020exploratory, chen2019learning, delarue2020reinforcement, lu2019learning, li2021learning, cunha2018deep, manchanda2019learning}.    
RL-based approaches represent the actual task performance as a reward in the optimization process.  
However, the RL reward is non-differentiable and the RL sample complexity hinders dealing with large-scale MIP problems.  
On the other hand, an unsupervised learning framework to solve CO problems on graphs proposed by \citet{karalias2020erdos} provides theoretical results based on probabilistic methods. 
However, the framework in \citet{karalias2020erdos} does not leverage or outperform the MIP solver.  
Among heuristic algorithms, the RENS \citep{berthold2014rens} heuristic combines two types of methods, relaxation and neighborhood search, to efficiently explore the search space of a mixed integer non-linear programming (MINLP) problem.  
One potential link between our method and the RENS heuristic is that our method can determine the coverage that leads to a higher success rate, regardless of the (N)LP relaxation used in the RENS algorithm.  
To the best of our knowledge, our work is the first to leverage the threshold function theory for optimizing the coverage to reduce the discrepancy between the ML and CO problems objectives.  

\section{Experiments}
\label{sec:exp}
\subsection{Metrics}
We use Primal Integral (PI), as proposed by \citet{berthold2013measuring} to evaluate the solution quality over the time dimension. 
PI measures the area between the primal bound and the optimal solution objective value over time.  
Formally,  
\begin{align}
\text{PI} = \int_{\tau=0}^T \mathbf{c}^{\top} \mathbf{x}^{\star}(\tau) \mathrm{d}\tau - T \mathbf{c}^{\top} \mathbf{x}^{\star},  
\end{align}
where $T$ is the time limit, $\mathbf{x}^{\star}(\tau)$ is the best feasible solution by time $\tau$, and $\mathbf{x}^{\star}$ is the optimal solution.  
PI is a metric representing the solution quality in the time dimension, \emph{i.e.,} how fast the quality improves.  
The optimal instance rate (OR) in Table \ref{result:medium_primal_objective} refers to the average percentage of the test instances solved to optimality in the given time limit.   
If the problems are not optimally solved, we use the best available solution as a reference to the optimal solution.  
The optimality gap (OG) refers to the average percentage gap between the optimal solution objective values and the average primal bound values in the given time limit.  

\subsection{Data and evaluation settings}
We evaluate the methods with the datasets introduced from \citep{gasse2022machine, gasse2019exact}.  
We set one minute to evaluate for relatively hard problems from \citep{gasse2022machine}: Workload apportionment and maritime inventory routing problems.  
For easier problems from \citep{gasse2019exact}, we solve for $1$, $10$, and $5$ seconds for the set covering, capacitated facility flow, and maximum independent set problems, respectively.  
The optimality results in the lower part of Table \ref{result:medium_primal_objective} are obtained by solving the problems using SCIP for $3$ minutes, $2$ hours, and $10$ minutes for the set covering, capacitated facility flow, and maximum independent set problems, respectively.  
We convert the objective of the capacitated facility flow problem into a minimization problem by negating the objective coefficients, in which the problems are originally formulated as a maximization problem in \citep{gasse2019exact}.  
Furthermore, we measure the performance of the model trained on the maritime inventory routing dataset augmented with MIPLIB 2017 \citep{gleixner2021miplib} to verify the effectiveness of data augmentation.    
Additional information about the dataset can be found in Appendix \ref{apdx:exp_detail}.
\subsection{Results}

\begin{table*}[hbt!]
  \caption{
  Overall results comparing optimality, SCIP, Neural diving, GNNExplainer, Confidence Filter, and Threshold-aware Learning in five MIP datasets: Workload apportionment, maritime inventory routing, set covering, capacitated facility flow, and maximum independent set.  
  Given the time limit, we use Primal Integral (PI) to evaluate the solution quality over time, Primal Bound (PB) to measure the final solution quality, Optimality Gap (OG) to compare the solution quality with the optimal solution, and Optimal instance Rate (OR) to show how many problems are solved to optimality.   
  }
  \label{result:medium_primal_objective}
  \begin{adjustbox}{width=\textwidth}
  \begin{tabular}{lcccccccccccc}
    \toprule
        & \multicolumn{4}{c}{Workload} & \multicolumn{4}{c}{Maritime Inventory} & \multicolumn{4}{c}{Maritime Inventory} \\
        & \multicolumn{4}{c}{Apportionment} & \multicolumn{4}{c}{Routing (non-augmented)} & \multicolumn{4}{c}{Routing (augmented)}\\
        \cmidrule(lr){2-5}  \cmidrule(lr){6-9}   \cmidrule(lr){10-13}
        & PI $\downarrow$ & PB $\downarrow$ & OG (\%) $\downarrow$ & OR (\%) $\uparrow$ & PI $\downarrow$ & PB $\downarrow$ & OG (\%) $\downarrow$ & OR (\%) $\uparrow$ & PI $\downarrow$ & PB $\downarrow$ & OG (\%) $\downarrow$ & OR (\%) $\uparrow$\\
    \midrule
    SCIP (30 hrs)   & - & $708.31$ & $0.01$ & $98$ & - & $50175.95$ & $0$ & $100$ & - & $50175.95$ & $0$ & $100$ \\
    \hline
    SCIP (1 min)    & $48182.31$ & $738.85$ & $4.36$ & $0$ & $41682758.55$ & $647961.18$ & $305.48$ & $30$ & $41682758.55$ & $647961.18$ & $305.48$ & $30$ \\
    Neural diving \citep{nair2020solving}    & $44926.06$ & $713.10$ & $0.69$ & $2$ & $28357794.50$ & $372770.44$ & $143.08$ & $30$ & $30101028.39$ & $322748.95$ & $115.88$ & $30$ \\
    GNNExplainer  \citep{ying2019gnn}  & $47165.63$ & $719.96$ & $1.65$ & $0$ & $29343874.24$ & $369122.56$ & $141.50$ & $20$ & $39004703.50$ & $509108.16$ & $249.95$ & $25$ \\
    Confidence Filter (Ours)      & $44862.77$ & $711.43$ & $0.47$ & $2$ & $24581204.50$ & $202526.50$ & $83.91$ & $\mathbf{35}$ &  $\mathbf{24795796.07}$ & $306165.57$ & $136.55$ & $30$ \\
    Threshold-aware Learning (Ours)   & $\mathbf{44634.85}$ & $\mathbf{711.34}$ & $\mathbf{0.45}$ & $\mathbf{4}$ & $\mathbf{24288490.56}$ & $\mathbf{192074.00}$ & $\mathbf{69.97}$ & $30$ & $33639950.16$ & $\mathbf{287189.69}$ &  $\mathbf{106.68}$ & $\mathbf{35}$ \\
    \bottomrule
  \end{tabular}
\end{adjustbox}
\end{table*}

\begin{table*}[hbt!]
  \centering
  \begin{adjustbox}{width=\textwidth}
  \begin{tabular}{lcccccccccccc}
    \toprule
        & \multicolumn{4}{c}{Set Covering} & \multicolumn{4}{c}{Capacitated Facility Flow} & \multicolumn{4}{c}{Maximum Independent Set} \\
        \cmidrule(lr){2-5}  \cmidrule(lr){6-9}  \cmidrule(lr){10-13}
        & PI $\downarrow$ & PB $\downarrow$ & OG (\%) $\downarrow$ & OR (\%) $\uparrow$ & PI $\downarrow$ & PB $\downarrow$ & OG (\%) $\downarrow$ & OR (\%) $\uparrow$ & PI $\downarrow$ & PB $\downarrow$ & OG (\%) $\downarrow$ & OR (\%) $\uparrow$ \\
    \midrule
    Optimality    & $-$ & $225.79$ & $0$ & $100$ & $-$ & $18049.77$ & $0$ & $100$ & $-$ & $-226.61$ & $0$ & $100$ \\
    \hline
    SCIP    & $788.04$ & $606.68$ & $165.96$ & $2$ & $834198.50$ & $43172.72$ & $139.40$ & $0$ & $-1009.56$ & $-218.64$ & $3.55$ & $10$ \\
    Neural diving \citep{nair2020solving}     & $493.65$ & $246.12$ & $8.99$ & $0$ & $\mathbf{312589.87}$ & $18221.34$ & $0.93$ & $\mathbf{34}$ & $-1062.37$ & $-226.51$ & $0.05$ & $95$ \\
    Confidence Filter (Ours)       & $431.31$ & $233.06$ & $3.13$ & $\mathbf{5}$ & $319075.00$ & $18214.33$ & $0.91$ & $16$ & $\mathbf{-1081.87}$ & $-226.44$ & $0.08$ & $87$ \\
    Threshold-aware Learning (Ours)   & $\mathbf{411.45}$ & $\mathbf{232.99}$ & $\mathbf{3.09}$ & $3$ & $318984.36$ & $\mathbf{18147.82}$ & $\mathbf{0.54}$ & $23$ & $-1075.95$ & $\mathbf{-226.57}$ & $\mathbf{0.02}$ & $\mathbf{96}$ \\
    \bottomrule
  \end{tabular}
\end{adjustbox}
\end{table*}
Table \ref{result:medium_primal_objective} shows that TaL outperforms the other methods on all five datasets.  
Also, CF outperforms ND in relatively hard problems: Workload apportionment and maritime inventory routing problems.  
In the workload apportionment dataset, TaL shows a $0.45\%$ optimality gap at the one-minute time limit, roughly $10 \times$ better than SCIP.   
In the maritime inventory routing non-augmented dataset, TaL achieves a $70\%$ optimality gap, which is $2\times$ better than ND and $3\times$ better than SCIP.   
In relatively easy problems, TaL outperforms ND in the optimality gap by $3\times, 2\times$, and $2\times$ in the set covering, capacitated facility flow, and maximum independent set problems, respectively.  
On average, TaL takes 10 seconds to show $0.54\%$ optimality gap while SCIP takes 2 hours to solve capacitated facility flow test problems to optimality.  
The best PI value (TaL) in the data-augmented maritime inventory routing dataset is close to the best PI value (CF) in the non-augmented dataset.  
Moreover, CF in the data-augmented dataset solves the same number of instances to the optimality as the TaL in the non-augmented dataset.  

\section{Conclusion}
In this work, we present the CF and TaL methods to enhance the learning-based MIP optimization performance.
We propose a provably efficient learning algorithm to estimate the search space for coverage optimization.
We also provide theoretical justifications for learning to optimize coverage in generating feasible solutions for MIP, from the perspective of probabilistic combinatorics.  
Finally, we empirically demonstrate that the variable assignment coverage is an effective auxiliary measure to fill the gaps between the SL and MIP objectives, showing competitive results against other methods.  

One limitation of our approach is that the neural network model relies on collecting MIP solutions, which can be computationally expensive.
Hence, it would be intriguing to devise a novel self-supervised learning approach to train a fundamental model for MIP problem-solving in combination with TaL.  
Also, another direction for future work is to extend TaL to a broader area of machine learning in a high-dimensional setting.  
\clearpage

\bibliographystyle{unsrtnat} 
\bibliography{neurips_2023}

\newpage 
\appendix
\onecolumn
\section{Appendix}
\label{sec:apdx}
\subsection{Notations}

\begin{itemize}
    \item $n$: the number of variables
    \item $m$: the number of linear constraints
    \item $r$: the number of discrete variables
    \item $\mathbf{x}=\left[x_{1},\ldots,x_{n}\right]\in\mathbb{R}^{n}$: the vector of variable values
    \item $\mathbf{x}_{\text{int}}\in\mathbb{Z}^{r}, \mathbf{x}_{\text{cont}}\in\mathbb{R}^{n-r}$ and $\mathbf{x}=[\mathbf{x}_{\text{int}};\mathbf{x}_{\text{cont}}]$
    \item $\bold x^\star$: the vector of optimal solution variable values
    \item $\bold x(\tau, \rho)$: the vector of primal solution at time $\tau$ with initial variable assignments of $\bold x$ with coverage $\rho$
    \item $\hat{\bold x}(\rho)$: the LP-relaxation solution with initial variable assignments of $\bold x$ with coverage $\rho$
    \item $\bar{\bold x}$: the vector of LP-relaxation solution variable values
    \item $\mathbf{c}\in\mathbb{R}^{n}$: the vector of objective coefficients
    \item $\mathbf{A}\in\mathbb{R}^{m\times n}$: the linear constraint coefficient matrix
    \item $\mathbf{b}\in\mathbb{R}^{m}$: the vector of linear constraint upper bound
    \item $\mathbf{e}_{i} = \mathbf e_i^{(n)} \in\mathbb{R}^{n}, i=1,\ldots,n$: the Euclidean standard basis
    \item $[n] = \{1, 2, \dots, n\}$: the index set
    \item $M=(\mathbf{A},\mathbf{b},\mathbf{c})$: the MIP problem
    \item $\mathcal I$: the set of dimensions of $\bold x$ that is discrete
    \item $x_d$: the $d$-th dimension of $\bold x$
    \item $s_d$: the binary classifier output to decide to fix $x_d$
    \item $p_\theta$: Neural diving model (without SelectiveNet) parameterized by $\theta$
    \item $\Gamma$: the cutoff value for Post hoc Confidence Filter 
    \item $\mathcal{R}(M)$: the set of feasible solution(s) $\mathbf{x}\in\mathbb{Z}^{r}\times\mathbb{R}^{n-r}$ 
    \item $\bar{\mathcal{R}}(M)$: the set of LP-relaxation solution(s) $\mathbf{x}\in \mathbb{R}^{n}$ which satisfies $\mathbf{A}\mathbf{x}\leq\mathbf{b}$ regardless the integrality constraints
    \item $\hat{\mathcal{R}}(M)$: the set of LP-feasible solution(s) $\mathbf{x}\in\mathbb Z^r \times \mathbb{R}^{n-r}$ which satisfies $\mathbf{A}\mathbf{x}\leq\mathbf{b}$ after partially fixing discrete variables of $\bold x$
    \item $\xi \in (0, 1)$: the probability at which threshold behavior occurs  
    \item $\mathscr P(X)$: the power set of $X$
    \item $a \ll b$ or $a(n) \ll b(n): \lim \limits_{n \to \infty} a(n) / b(n) = 0$
    \item $ a \gg b$ or $a(n) \gg b(n): \lim \limits_{n \to \infty} a(n) / b(n) = \infty$
    \item $ a \lesssim b : a(n) \lesssim b(n):$ there exists $C > 0, \text{s. t. } a(n) \le C b(n)$, where $n \in \mathcal N$, for some set $\mathcal N$
    \item $\kappa$: the target objective value of the LP-solution for partial discrete variables fixed
    
\end{itemize}

\subsection{Discrepancy between the learning and MIP objectives}
\begin{figure}
  \centering
  \includegraphics[width=0.7\columnwidth]{./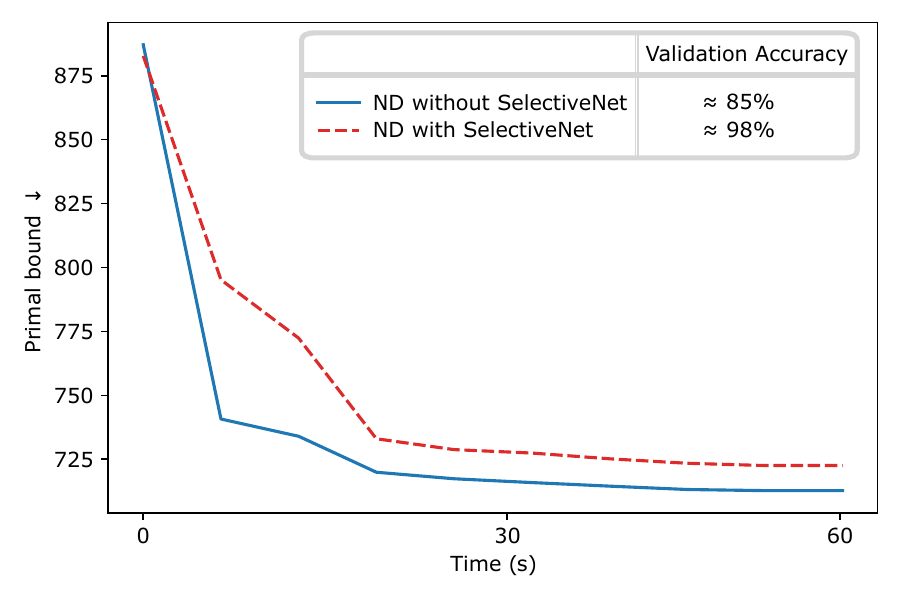}
  \caption{
  The variable value classification accuracy and the primal bound comparison in the validation dataset according to the existence of SelectiveNet.  
  Variable value classification accuracy measures how many integer variables values match out of the total integer variables between the neural network output and the solution; the higher, the better.  
  The primal bound represents the MIP solution quality; the lower, the better.  
  We measure the primal bound over time at the one-minute time limit.  
  The average discrete variable assignment rate is $20\%$ for both models.  
  We use Confidence Filter for `ND without SelectiveNet'.  
  SelectiveNet improves the classification accuracy by roughly $13$ percentage points margin, whereas Neural diving without SelectiveNet outperforms SelectiveNet in the primal bound.  
  The target dataset is the Workload Apportionment problem from \citet{gasse2022machine}.  
  }
  \label{fig:discrepancy}
\end{figure}
The ND learning objective is to minimize the loss function formulated as a negative log likelihood.  
On the other hand, MIP aims to optimize the objective function satisfying the constraints.  
Let’s say the learning objective is denoted by $\text{minimize } \mathcal L$.  
Let’s say the MIP objective is denoted by $\text{minimize } \mathbf c^\top \mathbf x$.  
Assume that we obtain the solution $\tilde{\mathbf x}$, which can be infeasible, from the ND model $p_\theta(\mathbf x|M)$ that corresponds to $\mathcal L$.  
Let $\tilde{\mathbf x}(\tau, \rho)$ be a feasible solution at solving time $\tau$ with initial variable assignment of $\tilde{\mathbf x}$ with coverage $rho$. 
It follows that $\mathbf c^\top \tilde{\mathbf x}(\tau, \rho)$ is not monotone with respect to its corresponding loss $\mathcal L$.  
In Figure \ref{fig:discrepancy}, ND with SelectiveNet outperforms ND without SelectiveNet by roughly $13$ percentage points in validation variable classification accuracy.  
In contrast, ND without SelectiveNet outperforms ND with SelectiveNet in the primal bound.  

\subsection{Formal Definitions}
\label{apdx:def}
Let $\mathscr{P}([n])$ denotes the power set of $[n]=\{1,\ldots, n\}$. 
A \emph{nontrivial property} $\mathcal{H}_{n}$ is a nonempty collection of subsets of $[n]$, where the subsets satisfy the conditions of $\mathcal{H}_{n}$ and $\mathcal{H}_{n}\neq\mathscr{P}([n])$ .
We call $\mathcal{H}_{n}$ \emph{monotone increasing} if $B_{ss}\in\mathcal{H}_{n}$ and $B_{ss}\subset B_{s} \subset [n]$ implies $B_{s}\in\mathcal{H}_{n}$.
We call $\mathcal{H}_{n}$ \emph{monotone decreasing} if $B_{s}\in\mathcal{H}_{n}$ and $B_{ss}\subset B_{s}$ implies $B_{ss}\in\mathcal{H}_{n}$.
Note that a monotone increasing (decreasing) property $\mathcal{H}_{n}$ is non-trivial \emph{if and only if} $\emptyset \notin \mathcal{H}_{n}$ and $[n]\in\mathcal{H}_{n}$ ($\emptyset \in\mathcal{H}_{n}$ and $[n]\notin \mathcal{H}_{n}$).

For $i=0,\ldots,n$, let $\mathcal{B}_{i}\subset [n]$ denote the collection of subsets of $[n]$ with $i$ elements, \emph{i.e.} $|\mathcal{B}_{i}|=\binom{n}{i}$, and define $\mathcal{H}_{i}:= \mathcal{H}_{n}\cap \mathcal{B}_{i}$.
Note that 
\begin{align}
    \mathbb{P}(\mathcal{H}_{n}|\mathcal{B}_{i}) = \frac{|\mathcal{H}_{i}|}{|\mathcal{B}_{i}|} = \frac{|\mathcal{H}_{i}|}{\binom{n}{i}}
\end{align}
where the conditional probability $\mathbb{P}(\mathcal{H}_{n}|\mathcal{B}_{i})$ denotes the probability that a random set of $\mathcal{B}_{i}$ has property $\mathcal{H}_{n}$.
If an increasing sequence $m=m(n)$ satisfies that $\mathbb{P}(\mathcal{H}_{n}|\mathcal{B}_{m(n)})\to 1$ as $n\to\infty$, then we say that $m$-subset of $\mathcal{B}_{m(n)}$ has $\mathcal{H}_{n}$ almost surely. 
Similarly, we say that $m$-subset of $\mathcal{B}_{m(n)}$ fails to have $\mathcal{H}_{n}$ almost surely if $\mathbb{P}(\mathcal{H}_{n}|\mathcal{B}_{m(n)})\to 0$ as $n\to\infty$.

\begin{definition}
    \label{def:threshold-function}
    A function $m^{\ast}(n)$ is a threshold function for a monotone increasing property $\mathcal{H}_{n}$ if for $m/m^{\ast}\to 0$ as $n\to\infty$ (we write $ m \ll m^{\ast}$), $m$-subset of $\mathcal{B}_{n}$ fails to have $\mathcal{H}_{n}$ almost surely and for $m/m^{\ast} \to \infty$ as $n\to\infty$ (we write $ m \gg m^{\ast}$), $m$-subset of $\mathcal{B}_{n}$ has $\mathcal{H}_{n}$ almost surely: 
    \begin{align}
        \label{eq:threshold-functionre-increasing}
        \lim_{n\to\infty} \mathbb{P}(\mathcal{H}_{n}|\mathcal{B}_{m(n)})=
        \begin{cases}
         0 & :  m(n) \ll m^{\ast}(n) \\ 
         1 & : m(n) \gg m^{\ast}(n).
        \end{cases}
    \end{align}
Similarly, a function $m^{\ast}(n)$ is said to be a threshold function for a monotone decreasing property $\mathcal{H}_{n}$ if for $m/m^{\ast}\to \infty$ as $n\to\infty$ , $m$-subset of $\mathcal{B}_{n}$ fails to have $\mathcal{H}_{n}$ almost surely and for $m/m^{\ast} \to 0$ as $n\to\infty$, $m$-subset of $\mathcal{B}_{n}$ has $\mathcal{H}_{n}$ almost surely: 
    \begin{align}
        \label{eq:threshold-function-decreasing}
        \lim_{n\to\infty} \mathbb{P}(\mathcal{H}_{n}|\mathcal{B}_{m(n)})=
        \begin{cases}
        0 & :  m(n) \gg m^{\ast}(n) \\ 
        1 & : m(n) \ll m^{\ast}(n).
        \end{cases}
    \end{align}
\end{definition}

We say $\mathcal H_n$ is \emph{bounded monotone decreasing} if $B_{s}\in\mathcal{H}_{n}$ and $B_{ss}\subset B_{s} \in \mathcal{B}_i$ implies $B_{ss}\in\mathcal{H}_{n}$ for $i < n$.  
We say $\mathcal H_n$ is \emph{bounded monotone increasing} if $B_{ss}\in\mathcal{H}_{i}$ and $B_{ss}\subset B_{s} \in \mathcal{B}_i$ implies $B_{s}\in\mathcal{H}_{i}$ for $i < n$.  
We introduce a local version of a threshold function in the Definition \ref{def:threshold-function} for a bounded monotone increasing and decreasing property.
\begin{definition}
    \label{def:local-threshold}
    A function $l^*(n)$ is a local threshold function for a bounded monotone increasing property $\mathcal H_n$ if for $l/l^* \to 0, l^*/i \to 0$ as $n \to \infty$, $l$-subset of $\mathcal B_n$ fails to have $\mathcal H_n$ almost surely, and for $l/l^* \to \infty, i/l \to \infty$ as $n \to \infty$, $l$-subset of $\mathcal B_n$ has $\mathcal H_n$ almost surely:
    \begin{align}
        \label{eq:local-threshold-function-increasing}
        \lim_{n\to\infty} \mathbb{P}(\mathcal{H}_{n}|\mathcal{B}_{l(n)})=
        \begin{cases}
         0 & :  l(n) \ll l^*(n) \ll i(n)\\ 
         1 & : i(n) \gg l(n) \gg l^*(n).
        \end{cases}
    \end{align}
Similarly, a function $l^{\ast}(n)$ is said to be a local threshold function for a bounded monotone decreasing property $\mathcal{H}_{n}$ if for $l/l^{\ast}\to \infty, i/l \to \infty$ as $n\to\infty$, $l$-subset of $\mathcal{B}_{n}$ fails to have $\mathcal{H}_{n}$ almost surely and for $l/l^{\ast} \to 0, l^*/i \to 0$ as $n\to\infty$, $l$-subset of $\mathcal{B}_{n}$ has $\mathcal{H}_{n}$ almost surely: 
    \begin{align}
        \label{eq:threshold-function-bounded-decreasing}
        \lim_{n\to\infty} \mathbb{P}(\mathcal{H}_{n}|\mathcal{B}_{l(n)})=
        \begin{cases}
        0 & :  i(n) \gg l(n) \gg l^{\ast}(n) \\ 
        1 & :  l(n) \ll l^{\ast}(n) \ll i(n).
        \end{cases}
    \end{align}
\end{definition}
From Definition \ref{def:local-threshold}, we have the statement about local threshold functions in bounded monotone properties in Theorem \ref{thm:bounded-increasing} in Appendix \ref{apdx:theory}.   
Theorem \ref{thm:bounded-increasing} states that every nontrivial bounded monotone increasing property has a local threshold function in a bounded domain.  

\begin{definition}
\label{def:feasibility}
Given an MIP problem $M = (\bold A, \bold b, \bold c)$, $\mathbf x \in \mathbb R^n$, we define the property $\mathcal P$ as
\begin{equation}
\mathcal{P}(\textbf{x}, \mathbf{A}, \mathbf b) :=\{B(n) \subset [n]: \text{ there exists } \mathbf{y}' \in \mathbb{Z}^{r} \times \mathbb{R}^{n-r} \text { subject to } \mathbf{A} [\mathbf{y} + \mathbf y'] \leq \mathbf{b}\},
\end{equation}
where $\mathbf y=\sum\limits_{i \in B(n)} x_{i}\mathbf e_i$, and $\mathbf y' = \sum\limits_{i \in [n] \setminus B(n)} y_{i}\mathbf e_i$.
    
\end{definition}

\begin{definition}
\label{def:lp-relaxation}
Given an MIP problem $M=(\mathbf{A},\mathbf{b},\mathbf{c})$, $\mathbf{x}\in \mathbb{R}^{n}$, and $\kappa\in\mathbb{R}$, we define the property $\mathcal{Q}_{\kappa}$ as
\begin{equation}
\begin{aligned}
\mathcal Q_{\kappa}(\mathbf{x}, \mathbf A, \mathbf b, \mathbf{c}) := &\{B(n) \subset [n]: \text{ there exists } \mathbf{z'} \in \mathbb{R}^{n} \\
&\text { subject to } \mathbf c^\top [\mathbf z + \mathbf z'] \geq \kappa \text{ and } \mathbf A[\mathbf z + \mathbf z']\leq \mathbf b \},
\end{aligned}
\end{equation}
where  $\mathbf z=\sum\limits_{i \in B(n)} x_{i} \mathbf e_i$ and $\mathbf z' = \sum\limits_{i \in [n]\setminus B(n)} z_{i} \mathbf e_i$.     
\end{definition}

\subsection{Theoretical Statements}
\label{apdx:theory}
\begin{theorem}
    \label{thm:bollobas}
Let $\xi\in (0,1)$. For each $n\in\mathbb{N}$, let $\mathcal{H}_{n}$ be a monotone increasing non-trivial property and set
\begin{align}
    \label{eq:half-threshold}
    m_{\ast}(n):=\max\left\{m : \mathbb{P}(\mathcal{H}_{n}|\mathcal{B}_{m}) \leq \xi  \right\}.
\end{align}
If $m \leq m_{\ast}(n)$, then
\begin{align}
    \label{eq:half-threshold-le}
    \mathbb{P}(\mathcal{H}_{n}|\mathcal{B}_{m})\leq 1- \xi^{\frac{m}{m_{\ast}(n)}}
\end{align}
and, if $m\geq m_{\ast}(n)+1$, then
\begin{align}
    \label{eq:half-threshold-ge}
    \mathbb{P}(\mathcal{H}_{n}|\mathcal{B}_{m})\geq 1-\xi^{\frac{m}{m_{\ast}(n)+1}}.
\end{align}
In particular, $m_{\ast}(n)$ is a threshold function of $\mathcal{H}_{n}$.
\end{theorem}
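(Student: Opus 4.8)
The plan is to reduce the statement to a single multiplicative estimate on the non-occurrence probabilities and then read off both displayed inequalities by anchoring at the definition of $m_{\ast}$. I write $f(m) = \mathbb{P}(\mathcal{H}_n \mid \mathcal{B}_m)$ and $q(m) = 1 - f(m)$. Because $\mathcal{H}_n$ is monotone increasing, a uniform $m$-subset that already has the property keeps it after enlargement, so $f$ is nondecreasing and $q$ is nonincreasing in $m$; and by the definition of $m_{\ast}(n)$ we have the two anchoring facts $f(m_{\ast}) \le \xi$ and $f(m_{\ast}+1) > \xi$. These are the only places the constant $\xi$ enters, so the entire argument rests on controlling how $q$ rescales with $m$.

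The engine I would establish first is the product inequality $q(st) \le q(s)^{t}$ for positive integers $s,t$ with $st \le n$ (equivalently $f(st) \ge 1 - (1-f(s))^{t}$). The idea is that a uniform $st$-subset can be generated by splitting it into $t$ disjoint blocks of size $s$, each marginally a uniform $s$-subset; since $\mathcal{H}_n$ is increasing, the whole set lies in $\mathcal{H}_n$ as soon as one block does, so avoidance forces every block to avoid $\mathcal{H}_n$. Passing from ``every block avoids'' to the product $q(s)^{t}$ is the crux and the step I expect to be the main obstacle, because it is the one point where monotonicity must be combined with a genuine probabilistic inequality rather than bookkeeping: in the binomial model $S(n,\rho)$ it is literal independence of the sparser copies whose union realizes the denser set, whereas in the uniform $\mathcal{B}_m$ model it requires a negative-association / correlation inequality for sampling without replacement. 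I would realize this in the uniform model by a sequential revelation argument, conditioning on the already-revealed prefix and using monotonicity of $\mathcal{H}_n$ to bound each successive block's conditional avoidance probability by $q(s)$.

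Granting the product inequality, both halves follow by taking the block scale to be the anchor. For $m \le m_{\ast}$ I would apply it with the unit $m_{\ast}$, obtaining a lower bound of the form $q(m) \ge q(m_{\ast})^{m/m_{\ast}}$, and then insert the anchor value of $q$ at $m_{\ast}$ to produce $f(m) \le 1 - \xi^{m/m_{\ast}}$; symmetrically, for $m \ge m_{\ast}+1$ I would use the unit $m_{\ast}+1$ and the anchor at $m_{\ast}+1$ to produce $f(m) \ge 1 - \xi^{m/(m_{\ast}+1)}$. Two subtleties arise here that I would track carefully: first, $m/m_{\ast}$ and $m/(m_{\ast}+1)$ need not be integers, so the clean exponents are recovered either by interpolating the product inequality in the binomial model (where the number of copies may be taken real via $\rho \mapsto 1-(1-\rho)^{c}$) or, in the integer model, by a monotonicity-of-$q$ argument filling the gap between an integer multiple of the unit and the anchor; second, the precise base appearing in the exponent (whether it is $\xi$ or $1-\xi$) is pinned down only through the exact anchoring inequalities, so I would verify the constant bookkeeping rather than treat it as automatic.

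Finally, the threshold claim is immediate from the two bounds. If $m = m(n) \ll m_{\ast}(n)$ then $m/m_{\ast} \to 0$, whence $\xi^{m/m_{\ast}} \to 1$ and $f(m) \to 0$, so an $m$-subset fails $\mathcal{H}_n$ almost surely; if $m \gg m_{\ast}$ then $m/(m_{\ast}+1) \to \infty$, whence $\xi^{m/(m_{\ast}+1)} \to 0$ and $f(m) \to 1$, so an $m$-subset has $\mathcal{H}_n$ almost surely. By Definition~\ref{def:threshold-function} this exhibits $m_{\ast}(n)$ as a threshold function for $\mathcal{H}_n$, completing the argument.
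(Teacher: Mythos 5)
Your plan is the genuine Bollob\'as--Thomason argument (establish the product inequality $q(st)\le q(s)^{t}$, anchor it at $m_{\ast}$, then read off the threshold conclusion), and this is a different and far more substantive route than the paper's own proof: the paper merely passes to the complementary monotone decreasing property $\mathcal{J}_n$ and writes down the chains $\mathbb{P}(\mathcal{J}_n|\mathcal{B}_m)\ge\mathbb{P}(\mathcal{J}_n|\mathcal{B}_{m_{\ast}})\ge\xi^{m/m_{\ast}}$ and $\mathbb{P}(\mathcal{J}_n|\mathcal{B}_m)\le\mathbb{P}(\mathcal{J}_n|\mathcal{B}_{m_{\ast}+1})\le\xi^{m/(m_{\ast}+1)}$, deferring to the original paper. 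Those chains are not valid derivations: the middle quantities do not depend on $m$, so they cannot be bounded by $\xi^{m/m_{\ast}}$ (which tends to $1$ as $m/m_{\ast}\to 0$) or by $\xi^{m/(m_{\ast}+1)}$ (which tends to $0$ as $m$ grows) over the whole stated ranges. So your attempt should be judged against the real Bollob\'as--Thomason proof, and there it has two genuine gaps.

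First, your realization of the crux fails as described. You propose to prove $q(st)\le q(s)^{t}$ by splitting a uniform $st$-set into $t$ disjoint blocks and bounding each block's conditional avoidance probability by $q(s)$ given the revealed prefix. That pointwise bound is false: take $\mathcal{H}_n=\{B:\{1,2\}\subseteq B\}$ and a first block containing element $1$ but not element $2$; this prefix avoids $\mathcal{H}_n$, yet every later disjoint block then avoids $\mathcal{H}_n$ with conditional probability $1>q(s)$. What is true is only the \emph{averaged} statement, and that is precisely the correlation-inequality content you hoped to sidestep; you must either invoke negative association of sampling without replacement (Joag-Dev--Proschan) for the decreasing events ``block $j$ avoids $\mathcal{H}_n$,'' or, more simply, follow Bollob\'as--Thomason: take $t$ \emph{independent, overlapping} uniform $s$-sets $S_1,\dots,S_t$; their union $U$ is exchangeable, hence uniform conditionally on its size $|U|\le st$, and since $f$ is nondecreasing in the size, $q(st)\le\mathbb{P}(U\notin\mathcal{H}_n)\le\mathbb{P}(\forall i:S_i\notin\mathcal{H}_n)=q(s)^{t}$ by genuine independence, with no correlation inequality needed. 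This route works directly in the uniform model, so the binomial/uniform dichotomy you set up is unnecessary.

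Second, the ``constant bookkeeping'' you deferred cannot come out the way you claim. With $m_{\ast}=\max\{m:f(m)\le\xi\}$ the anchors are $q(m_{\ast})\ge 1-\xi$ and $q(m_{\ast}+1)<1-\xi$, so your argument yields $f(m)\le 1-(1-\xi)^{m/m_{\ast}}$ and $f(m)\ge 1-(1-\xi)^{m/(m_{\ast}+1)}$ (up to the integrality of exponents), i.e.\ base $1-\xi$, not $\xi$. The two coincide only at $\xi=1/2$, the original Bollob\'as--Thomason choice; for $\xi\neq 1/2$ the stated base-$\xi$ bounds are actually false (dictator property $\mathcal{H}_n=\{B:1\in B\}$: with $\xi=0.9$ one has $f(m_{\ast})\approx 0.9\not\le 1-\xi^{1}=0.1$), so no amount of bookkeeping recovers them. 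The honest conclusion of your proof is the $(1-\xi)$-version, which still implies that $m_{\ast}$ is a threshold function; flagging the base as a subtlety to ``verify'' rather than identifying this as an error in the target inequalities leaves the write-up claiming something it cannot deliver. The non-integer-exponent issue you also flag is real (the product inequality gives exponents $\lfloor m_{\ast}/m\rfloor$ and $\lfloor m/(m_{\ast}+1)\rfloor$), but it costs only constants in the exponent and does not affect the threshold conclusion.
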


\begin{proof}
We slightly modify the proof in \citet[Theorem 4]{bollobas1987threshold} for rigorousness.  
Let $\mathcal J_n$ denote the negation of $\mathcal H_n$.  
Note that $\mathcal J_n$ is a nontrivial monotone decreasing property.  

If $m \le m_*(n)$, then 
$$\mathbb P(\mathcal J_n | \mathcal B_m) \ge \mathbb P(\mathcal J_n | \mathcal B_{m_*(n)}) \ge \xi^{\frac{m}{m_{\ast}(n)}}$$  

If $m \ge m_*(n) + 1$, then  
$$\mathbb P(\mathcal J_n | \mathcal B_m) \le \mathbb P(\mathcal J_n | \mathcal B_{m_*(n) + 1}) \le \xi^{\frac{m}{m_{\ast}(n) + 1}}$$  

\end{proof}

\begin{corollary} 
\label{cor:decreasing}
Let $\xi\in (0,1)$. 
For each $n\in\mathbb{N}$, let $\mathcal H_n$ be a monotone decreasing nontrivial property.  
\begin{align}
    \label{eq:half-threshold-decreasing}
    m_*(n):=\min\left\{m : \mathbb{P}(\mathcal{H}_{n}|\mathcal{B}_{m}) \geq \xi  \right\}.
\end{align}
If $m \le m_*(n)$, then  
\begin{align}
\mathbb P(\mathcal H_n | \mathcal B_m) \ge 1 - {\xi}^{\frac{m}{m_*(n)}}
\end{align}
If $m \ge m_*(n) + 1$, then
\begin{align}
\mathbb P(\mathcal H_n|\mathcal B_m) \le 1 - {\xi}^{\frac{m}{m_*(n) + 1}}
\end{align}
In particular, $m_*(n)$ is a threshold function of $\mathcal H_n$.  
\end{corollary}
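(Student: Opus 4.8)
The plan is to derive the statement for the decreasing property $\mathcal{H}_n$ directly from Theorem \ref{thm:bollobas} by passing to the complementary (negation) property, mirroring the device already used inside the proof of Theorem \ref{thm:bollobas}. First I would let $\mathcal{J}_n := \mathscr{P}([n]) \setminus \mathcal{H}_n$ be the negation of $\mathcal{H}_n$. Using the \emph{if and only if} characterization of nontriviality recorded in the Formal Definitions, a monotone decreasing property is nontrivial exactly when $\emptyset \in \mathcal{H}_n$ and $[n] \notin \mathcal{H}_n$; complementing swaps these into $\emptyset \notin \mathcal{J}_n$ and $[n] \in \mathcal{J}_n$, so $\mathcal{J}_n$ is a monotone \emph{increasing} nontrivial property and Theorem \ref{thm:bollobas} applies to it. The bridge between the two properties is the complementation identity $\mathbb{P}(\mathcal{J}_n \mid \mathcal{B}_m) = 1 - \mathbb{P}(\mathcal{H}_n \mid \mathcal{B}_m)$, which holds because $\mathcal{B}_m$ is partitioned into the $m$-subsets that do and do not have $\mathcal{H}_n$.

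Next I would match the thresholds. Writing the defining inequality through the identity, $\mathbb{P}(\mathcal{H}_n \mid \mathcal{B}_m) \ge \xi \iff \mathbb{P}(\mathcal{J}_n \mid \mathcal{B}_m) \le 1-\xi$, so the quantity $m_*(n)$ attached to $\mathcal{H}_n$ is precisely the Bollob\'as threshold of $\mathcal{J}_n$ taken at the complementary level; here I would also record that $m \mapsto \mathbb{P}(\mathcal{H}_n \mid \mathcal{B}_m)$ is nonincreasing (a coupling argument: if an $(m+1)$-subset has a downward-closed property, then its random $m$-subset has it too), which is what makes the crossing level in the definition of $m_*(n)$ unambiguous. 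With $m_*(n)$ identified as the threshold of $\mathcal{J}_n$, I would invoke the two inequalities \eqref{eq:half-threshold-le} and \eqref{eq:half-threshold-ge} of Theorem \ref{thm:bollobas} for $\mathcal{J}_n$ and substitute $\mathbb{P}(\mathcal{J}_n \mid \mathcal{B}_m) = 1 - \mathbb{P}(\mathcal{H}_n \mid \mathcal{B}_m)$; since $x \mapsto 1-x$ reverses inequalities, the upper bound valid for $m \le m_*(n)$ becomes a lower bound on $\mathbb{P}(\mathcal{H}_n \mid \mathcal{B}_m)$ and the lower bound valid for $m \ge m_*(n)+1$ becomes an upper bound, which yields the two displayed estimates.

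Finally, the threshold-function claim follows by passing to the limit: for $m \ll m_*$ the lower bound tends to $1$ and for $m \gg m_*$ the upper bound tends to $0$, which is exactly the decreasing case of Definition \ref{def:threshold-function}. The main obstacle I anticipate is bookkeeping rather than analytic: one must keep the direction of every inequality straight under complementation and, in particular, pin down that the level and the extremum ($\min$ versus $\max$) in the definition of $m_*(n)$ make it coincide with the Bollob\'as threshold of $\mathcal{J}_n$ at the complementary level, since a mismatch there would produce the wrong constant in the exponent. An equivalent route that sidesteps the re-indexing is to replay the two-line inequality chain from the proof of Theorem \ref{thm:bollobas} verbatim with the decreasing $\mathcal{H}_n$ in place of $\mathcal{J}_n$, using monotonicity of $m \mapsto \mathbb{P}(\mathcal{H}_n \mid \mathcal{B}_m)$ together with the power bound at $m_*(n)$.
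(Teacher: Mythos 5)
Your proposal follows the same route as the paper's own proof: the paper likewise passes to the negation $\mathcal J_n$ of $\mathcal H_n$, observes that $\mathcal J_n$ is a nontrivial monotone increasing property, and then writes the two chains $\mathbb P(\mathcal J_n \mid \mathcal B_m) \le \mathbb P(\mathcal J_n \mid \mathcal B_{m_*(n)}) \le \xi^{m/m_*(n)}$ for $m \le m_*(n)$ and $\mathbb P(\mathcal J_n \mid \mathcal B_m) \ge \mathbb P(\mathcal J_n \mid \mathcal B_{m_*(n)+1}) \ge \xi^{m/(m_*(n)+1)}$ for $m \ge m_*(n)+1$; that is, it replays the argument of Theorem \ref{thm:bollobas} on $\mathcal J_n$ rather than citing it, which is exactly the ``equivalent route'' you offer in your last sentence. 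Your write-up is in fact more explicit than the paper's on the points that need care: the complementation identity, the monotonicity of $m \mapsto \mathbb P(\mathcal H_n \mid \mathcal B_m)$, and the need to pin down the extremum and level in \eqref{eq:half-threshold-decreasing}. Your worry about the latter is well founded: for a decreasing property the set $\{m : \mathbb P(\mathcal H_n \mid \mathcal B_m) \ge \xi\}$ is an initial segment, so the stated $\min$ is degenerate and must be read as a $\max$ for your identification of $m_*(n)$ with the threshold of $\mathcal J_n$ at level $1-\xi$ to make sense.

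The one step that does not go through literally as written is the final substitution. Applying Theorem \ref{thm:bollobas} to $\mathcal J_n$ at the complementary level $1-\xi$ and then complementing gives $\mathbb P(\mathcal H_n \mid \mathcal B_m) \ge (1-\xi)^{m/m_*(n)}$ for $m \le m_*(n)$, and $\mathbb P(\mathcal H_n \mid \mathcal B_m) \le (1-\xi)^{m/(m_*(n)+1)}$ for $m \ge m_*(n)+1$: the base of the exponential is $1-\xi$, which is not the displayed form $1-\xi^{(\cdot)}$, so the substitution does not by itself ``yield the two displayed estimates.'' You need one more elementary line, namely $(1-\xi)^t \ge 1-\xi^t$ for $t \in [0,1]$ and the reversed inequality for $t \ge 1$, to pass from your bounds to the corollary's (the paper's own chains gloss over the same $\xi$ versus $1-\xi$ bookkeeping). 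This extra line is worth adding for another reason: your bounds are the ones that actually do the work, since $(1-\xi)^{m/m_*(n)} \to 1$ when $m \ll m_*$ and $(1-\xi)^{m/(m_*(n)+1)} \to 0$ when $m \gg m_*$, which is precisely what the ``in particular, $m_*(n)$ is a threshold function'' conclusion requires, whereas the displayed bounds degenerate to the trivial $\ge 0$ and $\le 1$ in those limits.
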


\begin{proof}
Let $\mathcal J_n$ denote the negation of $\mathcal H_n$.  
Note that $\mathcal J_n$ is a nontrivial monotone increasing property.  

If $m \le m_*(n)$, then 
$$\mathbb P(\mathcal J_n | \mathcal B_m) \le \mathbb P(\mathcal J_n | \mathcal B_{m_*(n)}) \le \xi^{\frac{m}{m_{\ast}(n)}}$$  
If $m \ge m_*(n) + 1$, then  
$$\mathbb P(\mathcal J_n | \mathcal B_m) \ge \mathbb P(\mathcal J_n | \mathcal B_{m_*(n) + 1}) \ge \xi^{\frac{m}{m_{\ast}(n) + 1}}$$  
\end{proof}

\begin{theorem} 
\label{thm:bounded-increasing}
Let $\xi\in(0,1)$. For each $n\in\mathbb{N}$, let $\mathcal{H}_{n}$ be a non-trivial bounded monotone increasing property, where the bound is $i$.  
\begin{align}
    \label{eq:local-half-threshold}
    l_*(n):=\max\left\{l : \mathbb{P}(\mathcal{H}_{n}|\mathcal{B}_{l}) \leq \xi  \right\}.
\end{align}
If $l \ge i + 1$, then it is of limited interest since $\mathcal H_n$ is non-monotone in such domain.  
If $l \le l_*(n) \le i$, then 
\begin{align}
\label{eq:local-half-threshold-le}
    \mathbb{P}(\mathcal{H}_{n}|\mathcal{B}_{l})\leq 1- \xi^{\frac{l}{l_*(n)}} \text{ ~and~  } \mathbb{P}(\mathcal{H}_{n}|\mathcal{B}_{l_*(n)})\leq 1- \xi^{\frac{l_*(n)}{i}}
\end{align}
If $i \ge l \ge l_*(n) + 1$, then  
\begin{align}
\label{eq:local-half-threshold-ge}
    \mathbb{P}(\mathcal{H}_{n}|\mathcal{B}_{l})\ge 1- \xi^{\frac{l}{l_*(n) + 1}} \text{ ~and~  } \mathbb{P}(\mathcal{H}_{n}|\mathcal{B}_{i})\ge 1- \xi^{\frac{i}{l}} 
\end{align}
In particular, $l_*$ is a local threshold function of $\mathcal H_n$ bounded by $i$.  
\end{theorem}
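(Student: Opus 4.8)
The plan is to mirror the negation argument already used for Theorem~\ref{thm:bollobas} and Corollary~\ref{cor:decreasing}, but to carry the whole computation out \emph{inside} the bounded range $\{0,1,\dots,i\}$ on which $\mathcal{H}_n$ is genuinely monotone. First I would dispose of the regime $l \ge i+1$: there the defining implication of a bounded monotone increasing property no longer applies, so $\mathbb{P}(\mathcal{H}_n \mid \mathcal{B}_l)$ need not vary monotonically in $l$ and no threshold statement is meaningful; this is exactly the ``limited interest'' clause and it simply restricts attention to $l \le i$. On $\{0,\dots,i\}$ the family behaves as an ordinary monotone increasing property, so $l \mapsto \mathbb{P}(\mathcal{H}_n \mid \mathcal{B}_l)$ is non-decreasing there, and its negation $\mathcal{J}_n := \neg\,\mathcal{H}_n$ is bounded monotone decreasing with non-increasing conditional probabilities on the same range.

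Next I would record the power estimate that drives Theorem~\ref{thm:bollobas}. Writing $g(l) := \mathbb{P}(\mathcal{J}_n \mid \mathcal{B}_l)$, the Bollob\'as--Thomason subset-sampling inequality gives $g(l_2) \le g(l_1)^{\,l_2/l_1}$ whenever $l_1 \le l_2 \le i$. The only point that exceeds the classical statement is checking that the coupling realizing an $l_2$-set as a union of order $l_2/l_1$ independent $l_1$-sets stays inside $\mathcal{B}_{\le i}$, which is precisely why the upper endpoint $i$ must surface in the final bounds. Combining this inequality with the monotonicity of $g$ and the definition $l_*(n)=\max\{l:\mathbb{P}(\mathcal{H}_n\mid\mathcal{B}_l)\le\xi\}$ is the entire engine of the proof.

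To obtain the four stated estimates I would instantiate the power inequality at the relevant endpoints, exactly as in the proofs of Theorem~\ref{thm:bollobas} and Corollary~\ref{cor:decreasing}. For $l \le l_*(n) \le i$, applying it on the pair $(l,\,l_*(n))$ and feeding in $\mathbb{P}(\mathcal{H}_n\mid\mathcal{B}_{l_*(n)})\le\xi$ yields $\mathbb{P}(\mathcal{H}_n\mid\mathcal{B}_l)\le 1-\xi^{l/l_*(n)}$, while pairing $l_*(n)$ with the top of the domain $i$ produces the endpoint bound $\mathbb{P}(\mathcal{H}_n\mid\mathcal{B}_{l_*(n)})\le 1-\xi^{l_*(n)/i}$. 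Symmetrically, for $i \ge l \ge l_*(n)+1$ one uses $\mathbb{P}(\mathcal{H}_n\mid\mathcal{B}_{l_*(n)+1})>\xi$ together with the inequality on $(l_*(n)+1,\,l)$ and on $(l,\,i)$ to get $\mathbb{P}(\mathcal{H}_n\mid\mathcal{B}_l)\ge 1-\xi^{l/(l_*(n)+1)}$ and $\mathbb{P}(\mathcal{H}_n\mid\mathcal{B}_i)\ge 1-\xi^{i/l}$. The local-threshold claim then follows by reading these off against Definition~\ref{def:local-threshold}: under $l \ll l_* \ll i$ the first block forces $\mathbb{P}(\mathcal{H}_n\mid\mathcal{B}_l)\to 0$, and under $i \gg l \gg l_*$ the second block forces $\mathbb{P}(\mathcal{H}_n\mid\mathcal{B}_l)\to 1$, so $l_*$ is a local threshold function bounded by $i$.

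The hard part will not be the arithmetic of chaining the inequalities but verifying the boundedness-respecting version of the power estimate: I must confirm that truncating the classical union-of-copies coupling at size $i$ does not destroy the geometric decay, since beyond size $i$ the property loses monotonicity and the usual correlation step can fail. Making that truncation precise---and thereby legitimizing the appearance of $i$ in the exponents $\xi^{l_*(n)/i}$ and $\xi^{i/l}$, which are exactly the terms needed to certify the two-sided conditions $l_* \ll i$ and $l \ll i$ that distinguish Definition~\ref{def:local-threshold} from the unbounded case---is where the real work lies.
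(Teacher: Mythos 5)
Your proposal takes essentially the same route as the paper's own proof: pass to the negation $\mathcal{J}_n$ (a nontrivial bounded monotone decreasing property), combine monotonicity of $l \mapsto \mathbb{P}(\mathcal{J}_n \mid \mathcal{B}_l)$ on the bounded range with the Bollob\'as--Thomason power inequality, instantiate at exactly the same endpoint pairs $(l, l_*)$, $(l_*, i)$, $(l_*+1, l)$, $(l, i)$, and read off the conclusion from Definition~\ref{def:local-threshold}. If anything, you are more explicit than the paper about the one genuinely new ingredient---verifying that the union-of-copies coupling behind the power estimate stays inside the bounded domain $\mathcal{B}_{\le i}$, which is what legitimizes the exponents $\xi^{l_*(n)/i}$ and $\xi^{i/l}$---a point the paper's proof passes over silently.
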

\begin{proof}
We adapt the proof in \citet[Theorem 4]{bollobas1987threshold} for Definition \ref{def:local-threshold}.  
Let $\mathcal J_n$ denote the negation of $\mathcal H_n$.  
Note that $\mathcal J_n$ is a nontrivial bounded monotone decreasing property.  

If $l \le l_*(n) \le i$, then 
$$\mathbb P(\mathcal J_n | \mathcal B_l) \ge \mathbb P(\mathcal J_n | \mathcal B_{l_*(n)}) \ge \xi^{\frac{l}{l_{\ast}(n)}} \text{ ~and~ } \mathbb P(\mathcal J_n | \mathcal B_{l_*(n)}) \ge \mathbb P(\mathcal J_n | \mathcal B_{i}) \ge \xi^{\frac{l_*(n)}{i}}$$  

If $i \ge l \ge l_*(n) + 1$, then  
$$\mathbb P(\mathcal J_n | \mathcal B_l) \le \mathbb P(\mathcal J_n | \mathcal B_{l_*(n) + 1}) \le \xi^{\frac{l}{l_{\ast}(n) + 1}} \text{ ~and~ } \mathbb P(\mathcal J_n | \mathcal B_i) \le \mathbb P(\mathcal J_n | \mathcal B_{l}) \le \xi^{\frac{i}{l}}$$  
\end{proof}


\begin{lemma}
\label{lemma:p}
$\mathcal P$ is monotone decreasing.  
\end{lemma}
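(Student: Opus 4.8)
The plan is to verify the definition of \emph{monotone decreasing} directly against Definition \ref{def:feasibility}: I must show that if $B_s \in \mathcal P$ and $B_{ss} \subset B_s$, then $B_{ss} \in \mathcal P$. The guiding intuition is that membership in $\mathcal P$ asserts that fixing the coordinates indexed by $B_s$ to the corresponding entries of $\mathbf x$ still admits a feasible, integrality-respecting completion of the remaining coordinates. Passing to a subset $B_{ss} \subset B_s$ only fixes fewer coordinates, hence relaxes the constraint, so any completion that worked for $B_s$ ought to keep working for $B_{ss}$.

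First I would unpack the hypothesis $B_s \in \mathcal P$. By Definition \ref{def:feasibility} there exists a completion $\mathbf y'$ supported on $[n] \setminus B_s$ such that, writing $\mathbf y = \sum_{i \in B_s} x_i \mathbf e_i$ and $\mathbf w := \mathbf y + \mathbf y'$, the full vector $\mathbf w$ satisfies $\mathbf A \mathbf w \le \mathbf b$ and $\mathbf w \in \mathbb Z^{r} \times \mathbb R^{n-r}$. The structural fact I would isolate is that $w_i = x_i$ for every $i \in B_s$, and in particular $w_i = x_i$ for every $i \in B_{ss}$, since $B_{ss} \subset B_s$.

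Next I would exhibit the very same $\mathbf w$ as a witness for $B_{ss}$. Set $\tilde{\mathbf y} = \sum_{i \in B_{ss}} x_i \mathbf e_i$ and let the completion $\mathbf y''$, supported on $[n] \setminus B_{ss}$, agree with $\mathbf w$ there, i.e. its $i$-th entry equals $w_i$ for each $i \in [n] \setminus B_{ss}$. Comparing coordinates, $\tilde{\mathbf y} + \mathbf y''$ equals $x_i = w_i$ on $B_{ss}$ and $w_i$ on the complement, so $\tilde{\mathbf y} + \mathbf y'' = \mathbf w$ identically. Consequently feasibility $\mathbf A \mathbf w \le \mathbf b$ and integrality $\mathbf w \in \mathbb Z^{r} \times \mathbb R^{n-r}$ are inherited verbatim, which yields $B_{ss} \in \mathcal P$ and establishes monotone decreasingness.

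There is no serious obstacle here; the entire content lies in the observation that shrinking the fixed index set enlarges the set of admissible completions, so the established witness can be reused. The only care needed is bookkeeping: verifying that the freed coordinates in $B_s \setminus B_{ss}$ are safely reabsorbed into the completion $\mathbf y''$ by assigning them their $\mathbf w$-values, and that this reconstruction reproduces $\mathbf w$ \emph{exactly}, so that both the linear and the integrality constraints are inherited rather than re-derived.
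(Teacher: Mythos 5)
Your proof is correct and takes essentially the same route as the paper: both arguments reuse the witness vector from $B_s$ (the paper writes $\mathbf z_{ss}' = \mathbf z_s' + \sum_{i\in B_s\setminus B_{ss}} x_i\mathbf e_i$, which is exactly your re-partition of $\mathbf w$ so that $\tilde{\mathbf y}+\mathbf y''=\mathbf w$), making the freed coordinates part of the completion. Your explicit remark that integrality of the completion is inherited verbatim is a point the paper's proof leaves implicit, but it is the same argument.
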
  
\begin{proof}
Let $B_{s}$ be a set, such that $B_{s} \subset B$.  
Let $B_{ss}$ be a set, such that $B_{ss} \subset B_{s}$.  
By definition, if $B_{s} \in \mathcal P$, then there exists $\mathbf{z}_{s}'$, such that $\mathbf{A}\left[\mathbf{z}_{s} + \mathbf{z}_{s}'\right]\leq \mathbf{b}$, where $\mathbf z_{s}=\sum\limits_{i \in B_{s}} x_{i}\mathbf e_i$, and $\mathbf z_{s}' = \sum\limits_{i \in [n]\setminus B_{s}} z_{i}\mathbf e_i$, given $\mathbf x$.  
Let $\mathbf{z}_{ss} = \sum\limits_{i\in B_{ss}}x_i \mathbf e_i$, and $\mathbf{z}_{ss}'=\sum\limits_{i\in [n]\setminus B_{ss}}z_i \mathbf e_i$ for $B_{ss}\subset B_{s}$.  
We can choose $\mathbf z_{ss}'$, such that $\mathbf z_{ss}' = \mathbf z_{s}' + \sum\limits_{i\in B_{s}\setminus B_{ss}}x_i \mathbf e_i$, where $\sum\limits_{i\in B_{s}\setminus B_{ss}}x_i \mathbf e_i = \mathbf z_{s} - \sum\limits_{i\in B_{ss}}x_i \mathbf e_i$.  
It follows that there exists $\mathbf z_{ss}'$, such that $\mathbf{A}[\mathbf{z}_{ss} + \mathbf{z}_{ss}'] = \mathbf{A}\left[\mathbf{z}_{s} + \mathbf{z}_{s}'\right]\leq \mathbf{b}$. 
Hence, $B_{s}\in\mathcal{P}$ implies $B_{ss}\in\mathcal{P}$.  
\end{proof} 

\begin{lemma}
\label{lemma:q}
Fix $\kappa$. If $\mathcal P$ is nontrivial, then $\mathcal Q_{\kappa}$ is bounded monotone increasing.  
\end{lemma}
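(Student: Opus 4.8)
The plan is to establish the bounded monotone increasingness of $\mathcal{Q}_\kappa$ directly from Definition \ref{def:lp-relaxation}, mirroring the explicit completion–extension argument used for $\mathcal{P}$ in Lemma \ref{lemma:p}, but run in the opposite (enlarging) direction and supplemented by an LP-objective monotonicity observation. Throughout I read $B \in \mathcal{Q}_\kappa$ as the statement that fixing the coordinates indexed by $B$ to the values $x_i$ leaves the LP relaxation feasible and its optimal (minimum) objective at least $\kappa$; equivalently, every LP-feasible completion of the fixed coordinates attains objective $\ge \kappa$, which is exactly what the condition ``$\hat{\mathbf{x}}(\rho)$ is feasible and $\mathbf{c}^\top \hat{\mathbf{x}}(\rho) \ge \kappa$'' records.

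First I would isolate the role of the bound. Since $\mathcal{P}$ is nontrivial we have $\emptyset \in \mathcal{P}$ and $[n] \notin \mathcal{P}$; fixing more coordinates only tightens the feasible polytope, so the family of fixings admitting a feasible completion is downward closed, and there is a largest cardinality $i < n$ below which fixings can remain LP-feasible. This $i$ plays the role of the bound in the definition of a bounded monotone increasing property and of the upper endpoint $i$ in the local threshold for $\mathcal{Q}_\kappa$; the assumed existence of LP-feasible partial assignments makes $\mathcal{Q}_\kappa$ nontrivial on the range below $i$.

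Next I would prove the monotonicity step. Take $B_{ss} \in \mathcal{Q}_\kappa$ and any $B_s$ with $B_{ss} \subset B_s$ and $|B_s| \le i$. Writing $\mathbf{z}_{ss} = \sum_{j \in B_{ss}} x_j \mathbf{e}_j$ and $\mathbf{z}_s = \sum_{j \in B_s} x_j \mathbf{e}_j$, the LP feasible region associated with $B_s$ is exactly the face of the $B_{ss}$ region obtained by additionally pinning the coordinates in $B_s \setminus B_{ss}$ to their values $x_j$. Because $|B_s| \le i$ this face is nonempty, and minimizing the linear objective over a subset of a polytope can only increase the optimum, so $\mathbf{c}^\top \hat{\mathbf{x}}(B_s) \ge \mathbf{c}^\top \hat{\mathbf{x}}(B_{ss}) \ge \kappa$. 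Hence $B_s \in \mathcal{Q}_\kappa$, which is precisely the bounded monotone increasing condition; feeding this into Theorem \ref{thm:bounded-increasing} then yields the local threshold $q_{0,\kappa}$.

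The step I expect to be the main obstacle is making the ``bound'' rigorous rather than cosmetic: unlike the unconditional descent in Lemma \ref{lemma:p}, here enlarging a fixing can destroy LP-feasibility, so the increasing step genuinely requires the restriction $|B_s| \le i$ together with the guarantee that feasibility persists on that range. Pinning $i$ cleanly from the nontriviality of $\mathcal{P}$ (which governs MIP- rather than LP-feasibility) and reconciling it with the LP-feasibility used inside $\mathcal{Q}_\kappa$ is the delicate point; I would handle it by working with the LP-feasibility threshold directly and invoking the nontriviality assumption on $\mathcal{Q}_\kappa$ to certify that the feasible range is nonempty, so that the objective-monotonicity argument above is never applied to an infeasible fixing.
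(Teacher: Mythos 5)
Your objective step is sound and coincides with the paper's: both you and the paper exploit the fact that pinning additional coordinates shrinks the LP polytope, so the LP minimum can only increase and therefore stays $\ge \kappa$. The genuine gap is in your feasibility step. You define the bound $i$ as the largest cardinality at which some fixing admits an LP-feasible completion, and then assert ``because $|B_s| \le i$ this face is nonempty.'' That inference is false: LP-feasibility of a fixing is not a function of its cardinality. The bound $i$ only certifies that \emph{some} set of cardinality $i$ is feasibly completable; it gives no control over the particular superset $B_s \supset B_{ss}$ you need. Concretely, with a constraint $x_1 + x_2 \le 1$ and predicted values $x_1 = x_2 = 1$, the fixings $\{1\}$ and $\{2\}$ are each LP-feasible (so $B_{ss} = \{1\} \in \mathcal Q_\kappa$ is possible), and $i$ can be made large by adding many unconstrained variables, yet the enlargement $B_s = \{1,2\}$ of cardinality $2 \le i$ is infeasible, so your monotonicity step gets applied to an empty face. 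You flag this as the delicate point, but your proposed repair --- invoking nontriviality of $\mathcal Q_\kappa$ to certify that the feasible range is nonempty --- does not close it, because nonemptiness of the feasible range again says nothing about the specific $B_s$ being enlarged to.

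The paper closes exactly this hole by making the bound a \emph{containment} condition rather than a cardinality condition. It takes an anchor set $B_p = S(n, p_0) \in \mathcal P$, where $p_0$ is the threshold function of $\mathcal P$ supplied by Theorem \ref{thm:bollobas} and Lemma \ref{lemma:p}, restricts the monotonicity claim to chains $B_{ss} \subset B_s \subset B_p$, and then \emph{constructs} an LP-feasible completion of every such $B_s$: if $\mathbf u_p'$ is the completion guaranteed by $B_p \in \mathcal P$ (MIP-feasible, hence LP-feasible since $\mathbb Z^n \subset \mathbb R^n$), set $\mathbf u_s' = \mathbf u_{p-s} + \mathbf u_p'$ with $\mathbf u_{p-s} = \sum_{i \in B_p \setminus B_s} x_i \mathbf e_i$, so that the completed vector literally equals $\mathbf u_p + \mathbf u_p'$ and $\mathbf A[\mathbf u_s + \mathbf u_s'] = \mathbf A[\mathbf u_p + \mathbf u_p'] \le \mathbf b$. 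Feasibility of the enlarged fixing is thus never an assumption to be certified; it is inherited from the anchor. If you replace your cardinality bound by membership of the chain inside such a $B_p$, your LP-minimum monotonicity argument then applies verbatim and you recover the paper's proof.
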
 
\begin{proof}  
$\mathcal P$ has a threshold function $p_0$ by Theorem \ref{thm:bollobas} and Lemma \ref{lemma:p}.  
We choose $p_0$ for the bound of $\mathcal Q_\kappa$.  
Therefore, we inspect only the domain of interest $q \in [0, p_0]$ in $S(n, q)$ for $\mathcal Q_\kappa$.  
Let $\bold u_p = \sum \limits_{i \in B_p} x_i \bold e_i \in \mathbb Z^n$ for $B_p = S(n, p_0)\in \mathcal P$.  
Given $B_p \in \mathcal P$, there exists $\bold u_p' = \sum \limits_{i \in [n] \setminus B_p} u_i \bold e_i \in \mathbb R^n$, such that $\bold A[\bold u_p + \bold u_p'] \le \bold b$, since $\mathbb Z^n \subset \mathbb R^n$.  

Let $B_{s}$ be a set, such that $B_{s} \subset B_p$.  
Let $B_{ss}$ be a set, such that $B_{ss} \subset B_{s}$. 
By definition, $B_{ss} \in \mathcal Q_{\kappa}$ implies there exists $\mathbf{u}_{ss}' = \sum\limits_{i \in [n] \setminus B_{ss}} u_{i}\mathbf e_i \in \mathbb R^n$, such that $\mathbf{A}\left[\mathbf{u}_{ss} + \mathbf{u}_{ss}'\right] \leq \mathbf{b}$, and $\mathbf c^\top \left[\mathbf u_{ss} + \mathbf u_{ss}'\right]  \ge \kappa$ where $\mathbf u_{ss} = \sum\limits_{i \in B_{ss}} x_{i}\mathbf e_i \in \mathbb Z^n$, given $\mathbf x$.  
Let $\bold u_s = \sum\limits_{i \in B_s} x_{i}\mathbf e_i \in \mathbb Z^n$, and $\bold u_s' = \sum\limits_{i \in [n] \setminus B_s} u_{i}\mathbf e_i \in \mathbb R^n$.  
We show that 1) $B_{ss} \in \mathcal Q_\kappa$ implies $\bold A[\bold u_s + \bold u_s'] \le \bold b$, and 2) $B_{ss} \in \mathcal Q_\kappa$ implies $\bold c^\top [\bold u_s + \bold u_s'] \ge \kappa$.    

1) Let $\bold u_{s - ss} = \sum\limits_{i \in B_{s} \setminus B_{ss}} x_i \bold e_i \in \mathbb Z^n$.  
Let $\bold u_p' = \sum\limits_{i \in [n] \setminus B_p}u_i \bold e_i \in \mathbb R^n$.  
For any $B_s \supset B_{ss}$, $B_s \setminus B_{ss} \in \mathcal P$.  
Also, we can find $\bold u_s' = \bold u_{p - s} + \bold u_p'$, such that 
\begin{align}
\bold A[\bold u_{ss} + \bold u_{s - ss} + \bold u_{p - s} + \bold u_p'] = \bold A[\bold u_{s} + \bold u_{s}'] = \bold A[\bold u_p + \bold u_p'] \le \bold b,
\end{align}
since $\mathbb Z^n \subset \mathbb R^n$.  
Therefore, $\bold A[\bold u_s + \bold u_s'] \le \bold b$.  

2) 
Let $\mathcal{R}(M)$ be the set of feasible solutions.  
Let $\hat{\mathcal R}(M)$ be the set of LP-feasible solution $\hat{\bold u}$, after fixing the nontrivial number of discrete variables of $M$.  
Let $\bar{\mathcal R}(M)$ be the set of LP-feasible solution $\bar{\bold u}$, without fixing variables of $M$.  
Since $\mathcal R(M) \subseteq \hat{\mathcal R}(M) \subseteq \bar{\mathcal R}(M)$, 
\begin{align}
\min\limits_{\bar{\bold u} \in \bar{\mathcal R}(M)} \bold c^\top \bar{\bold u} \le \min\limits_{\hat{\bold u} \in \hat{\mathcal R}(M)} \bold c^\top \hat{\bold u} \le \min\limits_{\bold u \in \mathcal R(M)} \bold c^\top \bold u 
\end{align}
Therefore, for any $B_s$, 
\begin{align}
\mathbf c^\top \left[\mathbf u_{s} + \mathbf u_{s}'\right] = \mathbf c^\top \left[\mathbf u_{ss} + \mathbf u_{s-ss} + \bold u_s' \right] \ge \mathbf c^\top \left[\mathbf u_{ss} + \mathbf u_{ss}'\right] \ge \kappa  
\end{align}

By 1) and 2), $B_{ss}\in\mathcal{Q}_{\kappa}$ implies $B_s \in \mathcal{Q}_{\kappa}$.  
Hence, $\mathcal{Q}_{\kappa}$ is bounded monotone increasing, in which the bound is at most $p_0$, such that $q \in [0, p_0]$ in $S(n, q)$ for $\mathcal Q_\kappa$.  
\end{proof}

\begin{lemma}
\label{lem:monotonic_q0}
If $\mathcal P$ is nontrivial, then $q_{0, \kappa}$ is monotonic increasing with regard to $\kappa$, bounded by $p_0$.
\end{lemma}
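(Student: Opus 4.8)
The plan is to reduce the statement to a set-inclusion monotonicity of the family $\{\mathcal{Q}_\kappa\}$ in $\kappa$ and then transport that monotonicity through the conditional probabilities into the explicit local-threshold formula of Theorem \ref{thm:bounded-increasing}.

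First I would establish that the properties are nested decreasingly in $\kappa$: for $\kappa_1 \le \kappa_2$ one has $\mathcal{Q}_{\kappa_2} \subseteq \mathcal{Q}_{\kappa_1}$. This is immediate from Definition \ref{def:lp-relaxation}: if $B(n) \in \mathcal{Q}_{\kappa_2}$, the witnessing completion $\mathbf z'$ satisfies $\mathbf A[\mathbf z + \mathbf z'] \le \mathbf b$ and $\mathbf c^\top[\mathbf z + \mathbf z'] \ge \kappa_2 \ge \kappa_1$, so the same $\mathbf z'$ certifies $B(n) \in \mathcal{Q}_{\kappa_1}$. Consequently, for every cardinality $l$ (equivalently every coverage level under the correspondence $l \approx \rho n$),
\begin{align}
\mathbb P(\mathcal Q_{\kappa_2} \mid \mathcal B_l) \le \mathbb P(\mathcal Q_{\kappa_1} \mid \mathcal B_l).
\end{align}

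Next I would invoke the threshold formula $l_*(n) = \max\{l : \mathbb P(\mathcal H_n \mid \mathcal B_l) \le \xi\}$ from Theorem \ref{thm:bounded-increasing}, which identifies $q_{0,\kappa}$. Because $\kappa \mapsto \mathbb P(\mathcal Q_\kappa \mid \mathcal B_l)$ is nonincreasing for each fixed $l$, the sublevel set $\{l : \mathbb P(\mathcal Q_\kappa \mid \mathcal B_l) \le \xi\}$ only grows as $\kappa$ increases; hence its maximum is nondecreasing, giving $q_{0,\kappa_1} \le q_{0,\kappa_2}$, i.e. monotonicity in $\kappa$. For the bound, Lemma \ref{lemma:q} shows that under the nontriviality of $\mathcal P$ the property $\mathcal Q_\kappa$ is bounded monotone increasing with bound $p_0$, so its domain of interest is $\rho \in [0, p_0]$ and the associated local threshold satisfies $q_{0,\kappa} \lesssim p_0$.

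The main obstacle I anticipate is the degenerate regime in $\kappa$ together with the coverage-cardinality bookkeeping. For $\kappa$ large enough, $\mathcal Q_\kappa$ can become trivial (no completion attains so large an objective), at which point no local threshold exists; I would therefore restrict to the range of $\kappa$ for which $\mathcal Q_\kappa$ is nontrivial, as assumed in the LP-relaxation satisfiability discussion, and check that the maximum in the threshold formula is attained and finite throughout this range. The second delicate point is ensuring the threshold does not escape above $p_0$ as $\kappa$ grows: this is exactly what the boundedness in Lemma \ref{lemma:q} guarantees, since beyond coverage $p_0$ the property is no longer monotone and the effective domain is capped at $p_0$.
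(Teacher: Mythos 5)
Your proposal is correct and takes essentially the same route as the paper's proof: both rest on the nesting $\mathcal{Q}_{\kappa_2} \subseteq \mathcal{Q}_{\kappa} \subseteq \mathcal{Q}_{\kappa_1}$ for $\kappa_1 \le \kappa \le \kappa_2$ (the paper records the resulting probability inequality $\mathbb{P}(S(n, q_{0,\kappa}) \in \mathcal{Q}_{\kappa_1}) \ge \xi \ge \mathbb{P}(S(n, q_{0,\kappa}) \in \mathcal{Q}_{\kappa_2})$ after restricting $\kappa$ to the window $\mathbf{c}^\top \bar{\mathbf{x}} < \kappa < \mathbf{c}^\top \mathbf{x}^\star$, which plays the same role as your nontriviality caveat), together with the bound $p_0$ supplied by Lemma \ref{lemma:q}. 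If anything, your sublevel-set step converting pointwise probability monotonicity into monotonicity of the threshold $q_{0,\kappa}$ makes explicit an inference the paper leaves implicit after its displayed inequality.
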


\begin{proof}
Let $\bold x^\star$ be the vector of optimal solution variable values.  
Let $\bar{\bold x}$ be the vector of LP-relaxation solution variable values.  
By Lemma \ref{lemma:q} and Theorem \ref{thm:bounded-increasing}, $\mathcal Q_\kappa$ is bounded monotone increasing and has a local threshold function $q_{0, \kappa}$, where $\kappa$ is fixed.  
We choose the constant $\xi \in (0, 1)$ as in Theorem \ref{thm:bollobas}, such that $\mathbb P(S(n, q_{0, \kappa}) \in \mathcal Q_\kappa) = \xi$.  
We choose $p_0$ as a bound of $\mathcal Q_\kappa$ as in Lemma $\ref{lemma:q}$.
Suppose $\kappa_1 \le \kappa_2$, such that $\bold c^\top \bar{\bold x} < \kappa_1 \le \kappa \le \kappa_2 < \bold c^\top \bold x^\star$. 
If follows that 
\begin{align}
\label{eq:q_kappa_prob}
\mathbb P(S(n, q_{0, \kappa}) \in \mathcal Q_{\kappa_1}) \ge \mathbb P(S(n, q_{0, \kappa}) \in \mathcal Q_\kappa) \ge \mathbb P(S(n, q_{0, \kappa}) \in \mathcal Q_{\kappa_2})
\end{align}
\end{proof}

\begin{theorem} 
\label{thm:f}
Fix $\kappa$. If $q_{0, \kappa} \ll p_0$, then $\mathcal F_\kappa$ has an interval of certainty, such that $q_{0, \kappa} \ll \rho \ll p_0$ implies $\mathbb P(S(n, \rho) \in \mathcal F_\kappa) \rightarrow 1$.  
\end{theorem}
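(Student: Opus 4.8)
The plan is to express the target event as an intersection and reduce it to the two threshold statements already established. Writing $\mathcal F_\kappa = \mathcal P \cap \mathcal Q_\kappa$, the event $\{S(n,\rho) \in \mathcal F_\kappa\}$ is precisely $\{S(n,\rho) \in \mathcal P\} \cap \{S(n,\rho) \in \mathcal Q_\kappa\}$. First I would apply the elementary inclusion--exclusion lower bound
\begin{align}
\mathbb P(S(n,\rho) \in \mathcal F_\kappa) \ge \mathbb P(S(n,\rho) \in \mathcal P) + \mathbb P(S(n,\rho) \in \mathcal Q_\kappa) - 1,
\end{align}
so that it suffices to show each of the two probabilities on the right tends to $1$ under the hypothesis $q_{0,\kappa} \ll \rho \ll p_0$.

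For the first term I would invoke the threshold behaviour of $\mathcal P$. By Lemma \ref{lemma:p}, $\mathcal P$ is nontrivial monotone decreasing, so by Corollary \ref{cor:decreasing} (equivalently Theorem \ref{thm:bollobas} applied to the negation) it admits the threshold function $p_0$; since $\rho \ll p_0$, this gives $\mathbb P(S(n,\rho) \in \mathcal P) \to 1$. For the second term I would use Lemma \ref{lemma:q}, which shows $\mathcal Q_\kappa$ is bounded monotone increasing with bound at most $p_0$; hence by Theorem \ref{thm:bounded-increasing} it admits a local threshold function $q_{0,\kappa}$ whose upper bound $i$ may be taken to be $p_0$. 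The chain $q_{0,\kappa} \ll \rho \ll p_0 = i$ is exactly the regime $i \gg \rho \gg q_{0,\kappa}$ of Definition \ref{def:local-threshold}, so $\mathbb P(S(n,\rho) \in \mathcal Q_\kappa) \to 1$. Substituting both limits into the lower bound yields $\liminf \mathbb P(S(n,\rho) \in \mathcal F_\kappa) \ge 1$, and since any probability is at most $1$, a squeeze gives $\mathbb P(S(n,\rho) \in \mathcal F_\kappa) \to 1$.

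The point that needs care --- and which the hypothesis $q_{0,\kappa} \ll p_0$ is designed to handle --- is the \emph{compatibility} of the two regimes: the decreasing property $\mathcal P$ demands $\rho$ small ($\rho \ll p_0$) while the increasing property $\mathcal Q_\kappa$ demands $\rho$ large ($\rho \gg q_{0,\kappa}$). These are simultaneously satisfiable only because $q_{0,\kappa} \ll p_0$ leaves an asymptotically nonempty window; for instance $\rho = \sqrt{q_{0,\kappa}\, p_0}$ satisfies both $\rho / p_0 \to 0$ and $q_{0,\kappa} / \rho \to 0$. I would also verify that the bound $i$ in the local threshold of $\mathcal Q_\kappa$ can genuinely be identified with $p_0$, which is exactly the content of the bound construction in Lemma \ref{lemma:q}, so that the upper condition $\rho \ll i$ coincides with $\rho \ll p_0$ and the two threshold statements share a common valid range of $\rho$. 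Beyond this alignment, the argument is just the union bound followed by a squeeze, so I expect no further obstacle.
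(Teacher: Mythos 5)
Your proof is correct and follows essentially the same route as the paper's: both decompose $\mathcal F_\kappa$ as the intersection of $\mathcal P$ and $\mathcal Q_\kappa$, invoke Lemma \ref{lemma:p} and Lemma \ref{lemma:q} (with Theorem \ref{thm:bounded-increasing}) for the threshold and local-threshold behavior, and conclude via the same inclusion--exclusion lower bound $\mathbb P(\mathcal P \cap \mathcal Q_\kappa) \ge \mathbb P(\mathcal P) + \mathbb P(\mathcal Q_\kappa) - 1$. Your additional remarks on the compatibility of the two regimes and on identifying the bound $i$ with $p_0$ make explicit points the paper leaves implicit, but the argument is the same.
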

\begin{proof} 
By Lemma \ref{lemma:p} and \ref{lemma:q}, there exist $p_0$ and $q_0$ such that
$$\mathbb{P}(S(n, \rho) \in \mathcal{P}) \rightarrow
\begin{cases}
1 &\text { if } \rho \ll p_0 \\
0 &\text { if } \rho \gg p_0
\end{cases}$$
and 
$$\mathbb{P}(S(n, \rho) \in \mathcal{Q}_{\kappa}) \rightarrow
\begin{cases}
0 &\text { if } \rho \ll q_{0, \kappa} \ll p_0 \\
1 &\text { if } p_0 \gg \rho \gg q_{0, \kappa}
\end{cases}$$  

Since
\begin{equation}
\begin{aligned}
\label{eq:p_intersection}
\mathbb P(S(n, \rho) \in \mathcal F_\kappa) &\ge \mathbb P(S(n, \rho) \in \mathcal P) + \mathbb P(S(n, \rho) \in \mathcal Q_{\kappa}) - 1 
\end{aligned}  
\end{equation}  

By equation (\ref{eq:p_intersection}),  
\begin{align}
q_{0, \kappa} \ll \rho \ll p_0 \text{ implies } \mathbb P(S(n, \rho) \in \mathcal F_\kappa) \rightarrow 1
\end{align}
\end{proof}

\begin{proposition}
\label{prop:global_optimum_psi}
Fix $\hat{\rho}_\phi$ and $\hat{\rho}_\Phi^{\text{prob}}$.  
If $\hat{\rho}_\psi \sim \text{Uniform}(0, 1)$. then the optimal $\hat{\rho}_\psi$ in $\mathcal L_{\text{threshold}}$ is $p_0$. 
\end{proposition}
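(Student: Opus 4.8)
The plan is to collapse $\mathcal L_{\text{threshold}}$ to a one-dimensional objective in $\hat\rho_\psi$ and then read off its minimiser from the threshold behaviour of the feasibility property $\mathcal P$. First I would observe that, since $\hat\rho_\phi$ and $\hat\rho_\Phi^{\text{prob}}$ are held fixed, the second summand $\text{BCE}(\hat\rho_\Phi^{{\text{prob}}^{(t-1)}}, \hat\rho_\phi^{(t)})$ in \eqref{eq:loss_threshold} is constant and drops out; minimising $\mathcal L_{\text{threshold}}$ over $\hat\rho_\psi$ is therefore equivalent to minimising the single term $\text{BCE}(\hat\rho_\Psi^{\text{prob}}, \hat\rho_\psi)$.

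Next I would pin down the target $\hat\rho_\Psi^{\text{prob}}$. At the optimum of the $\Psi$-branch of $\mathcal L_{\text{prob}}$ in \eqref{eq:loss_value}, the scalar $\hat\rho_\Psi^{\text{prob}}$ minimises a binary cross entropy against the Bernoulli observation $I^{\text{feas}}_{S(n,\hat\rho_\psi)}$, so it must equal the conditional mean $\mathbb E[I^{\text{feas}}_{S(n,\hat\rho_\psi)}] = \mathbb P(S(n,\hat\rho_\psi)\in\mathcal P) =: g(\hat\rho_\psi)$. The hypothesis $\hat\rho_\psi\sim\mathrm{Uniform}(0,1)$ is precisely what lets me treat this target as the \emph{entire} feasibility curve $g$ over the full search interval $(0,1)$ rather than a single detached constant, so the effective objective becomes $\ell(\rho) := \text{BCE}(g(\rho),\rho)$ on $(0,1)$.

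Then I would invoke the structure of $g$. Since $\mathcal P$ is a nontrivial monotone decreasing property (Lemma \ref{lemma:p}) it admits a threshold function $p_0$ (Corollary \ref{cor:decreasing}, in the spirit of Theorem \ref{thm:bollobas}), so $g(\rho)\to 1$ for $\rho\ll p_0$ and $g(\rho)\to 0$ for $\rho\gg p_0$. Substituting these asymptotics into $\ell$ gives
\begin{align}
\ell(\rho)\to -\log\rho \text{ for } \rho\ll p_0, \qquad \ell(\rho)\to -\log(1-\rho) \text{ for } \rho\gg p_0,
\end{align}
the first strictly decreasing and the second strictly increasing in $\rho$. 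Hence $\ell$ admits no minimiser in the region where $g\approx 1$ nor in the region where $g\approx 0$; every minimiser must lie in the narrow transition window around $p_0$, which collapses to the single point $p_0$ as $n\to\infty$ by Definition \ref{def:threshold-function}. This forces the optimal $\hat\rho_\psi$ to be $p_0$.

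The hard part will not be the monotonicity computation but the two coupling/limit issues. First, I must rigorously justify $\hat\rho_\Psi^{\text{prob}}=g(\hat\rho_\psi)$, i.e. that the $\Psi$-branch of $\mathcal L_{\text{prob}}$ genuinely recovers the feasibility probability at the joint optimum rather than a stale detached target; the $(t-1)$ versus $(t)$ indexing has to be reconciled at the fixed point of the alternating updates. Second, I must control the exact step-function limit: at finite $n$ the curve $g$ is continuous, so $\ell$ attains its minimum \emph{inside} the transition window, and the argument is completed only by showing this window shrinks to $p_0$ so that the minimiser converges to $p_0$. The uniform-prior hypothesis together with the threshold definition are exactly the ingredients that make this collapse precise.
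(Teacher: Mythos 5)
Your argument reaches the right conclusion at the paper's level of rigor, but it takes a genuinely different route. The shared ingredient is the BCE-optimality step: like the paper, you first observe that at the optimum of the $\Psi$-branch of $\mathcal L_{\text{prob}}$ in \eqref{eq:loss_value}, $\hat{\rho}_\Psi^{\text{prob}}$ equals the feasibility probability $g(\hat{\rho}_\psi) := \mathbb P(S(n,\hat{\rho}_\psi)\in\mathcal P)$ (the paper phrases this with $H$ samples and the elementary fact $\arg\max_w \left[ a\log w + b\log(1-w)\right] = a/(a+b)$). From there the proofs diverge. The paper keeps the target \emph{detached}, exactly as the $(t-1)$ superscript in \eqref{eq:loss_threshold} dictates: minimizing $\mathcal L_{\text{threshold}}$ over $\hat{\rho}_\psi$ with the target frozen forces $\hat{\rho}_\psi$ to equal the mean of the targets, and chaining the two optimality conditions yields the self-consistency (fixed-point) equation $\hat{\rho}_\psi \approx \mathbb P(S(n,\hat{\rho}_\psi)\in\mathcal P) =: \xi$, after which $p_0$ is identified with this value by exploiting the freedom in Theorem \ref{thm:bollobas} to define the threshold at any level $\xi\in(0,1)$ (``choose $p_0=\xi$''). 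You instead substitute the optimal target back into the loss and minimize the composed objective $\ell(\rho)=\text{BCE}(g(\rho),\rho)$, locating its minimizer by a landscape argument: $\ell\approx -\log\rho$ (decreasing) where $g\approx 1$, and $\ell\approx -\log(1-\rho)$ (increasing) where $g\approx 0$, so every minimizer lies in the transition window around $p_0$. Your route buys a cleaner geometric explanation of \emph{why} the optimum must sit at the threshold, and it dispenses with the paper's empirical-sample bookkeeping; the paper's route buys fidelity to the algorithm, because your composed objective lets the target move together with $\hat{\rho}_\psi$, which is precisely not the detached-target optimization that \eqref{eq:loss_threshold} defines. At finite $n$ these two formalizations pick out different points (a stationarity condition involving $g'$ versus the fixed point $\rho=g(\rho)$), although both lie in the transition window and both converge to $p_0$. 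You correctly flag this ``$(t-1)$ versus $(t)$'' reconciliation as the remaining hard part but leave it open; the paper's fixed-point chaining is exactly how to close it, and it is no more work than what you already wrote. Your other caveat, that the transition window does not literally collapse to a single point, applies equally to the paper's own proof, whose final identification $p_0=\xi$ is loose in the same asymptotic sense.
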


\begin{proof}
By Theorem \ref{thm:bollobas} and Lemma \ref{lemma:p}, $\mathcal P$ has a threshold function $p_0$.  
Suppose there exist $H$ samples.  
For $\hat{\rho}_\phi$ and $\hat{\rho}_\Phi^{\text{prob}}$ fixed, the training of $\mathcal L_{\text{prob}}$ is to maximize 
\begin{align}
\sum\limits_{i = 1}^{H} I_{S(n, \rho) \in \mathcal P}^{(i)} \log \hat{\rho}_\Psi^{\text{prob}} + \sum\limits_{i = 1}^{H}(1 - I_{S(n, \rho) \in \mathcal P}^{(i)})\log (1 - \hat{\rho}_\Phi^{\text{prob}})  
\end{align}
For any $(a, b) \in \mathbb R^2 \setminus (0, 0)$, let $Z: w \to a \log (w) + b \log (1 - w)$.  
Then $\arg\max Z(w) = \frac{a}{a + b}$.
Hence,
\begin{align}
\label{eq:argmin_l_value_psi}
\arg\min \mathcal L_{\text{prob}} = \frac{1}{H}\sum\limits_{i = 1}^{H} I_{S(n, \rho) \in \mathcal P}^{(i)}
\end{align}
for $\hat{\rho}_\Phi^{\text{prob}}$ fixed.   
Similarly, the training of $\mathcal L_{\text{threshold}}$ is to maximize
\begin{align}
\sum\limits_{i = 1}^{H} \hat{\rho}_\Psi^{{\text{prob}}^{(i)}} \log \hat{\rho}_\psi + \sum\limits_{i = 1}^{H}(1 - \hat{\rho}_\Psi^{{\text{prob}}^{(i)}})\log (1 - \hat{\rho}_\psi)
\end{align}
, where $\hat{\rho}_\Psi^{{\text{prob}}^{(i)}}$ corresponds to $\hat{\rho}_\Psi^{\text{prob}}$ at $i$-th iteration for $I_{S(n, \rho) \in \mathcal P}^{(i)}$.   
Hence,  
\begin{align}
\label{eq:argmin_l_threshold_psi}
\arg\min \mathcal L_{\text{threshold}} = \frac{1}{H}\sum\limits_{i = 1}^{H} \hat{\rho}_\Psi^{{\text{prob}}^{(i)}}
\end{align}
At the global minima of $\mathcal L_{\text{threshold}}$, and $\mathcal L_{\text{prob}}$,
\begin{align}
\hat{\rho}_\Psi^{\text{prob}} = \frac{1}{H}\sum\limits_{i = 1}^{H} I_{S(n, \rho) \in \mathcal P}^{(i)} 
\end{align}
and 
\begin{align}
\hat{\rho}_\psi = \frac{1}{H}\sum\limits_{i = 1}^{H} \hat{\rho}_\Psi^{{\text{prob}}^{(i)}} \approx \frac{1}{H}\sum\limits_{i = 1}^{H} I_{S(n, \rho) \in \mathcal P}^{(i)},
\end{align}
for large enough $H$.  
Let $\xi = \mathbb P(S(n, \hat{\rho}_\psi) \in \mathcal P) = \frac{1}{H}\sum\limits_{i = 1}^{H} I_{S(n, \rho) \in \mathcal P}^{(i)} = \arg\min \mathcal L_{\text{prob}}$.  
Choose $p_0 = \xi$.  
\end{proof}

\begin{corollary}
\label{cor:global_optimum_phi}
Fix $\kappa^*, \hat{\rho}_\psi$ and $\hat{\rho}_\Psi^{\text{prob}}$.  
If $\hat{\rho}_\phi \sim \text{Uniform}(0, 1)$. then the optimal $\hat{\rho}_\phi$ in $\mathcal L_{\text{threshold}}$ is $q_{0, \kappa^*}$.  
\end{corollary}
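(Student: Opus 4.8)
The plan is to mirror the proof of Proposition~\ref{prop:global_optimum_psi} almost verbatim, replacing the feasibility property $\mathcal P$ and its global threshold $p_0$ by the LP-relaxation satisfiability property $\mathcal Q_{\kappa^*}$ and its local threshold $q_{0,\kappa^*}$, and replacing the feasibility indicator by the LP-satisfiability indicator $I_{S(n,\hat{\rho}_\pi)}^{\text{LP-sat}}$. First I would record the one structural fact that makes the substitution legitimate: because $\mathcal P$ is nontrivial, Lemma~\ref{lemma:q} guarantees that $\mathcal Q_{\kappa^*}$ is \emph{bounded} monotone increasing on the domain $\rho \in [0, p_0]$, and hence Theorem~\ref{thm:bounded-increasing} endows it with a \emph{local} threshold function $q_{0,\kappa^*}$. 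This is the only genuine point of departure from the proposition, since $\mathcal Q_{\kappa^*}$ is not globally monotone and the global threshold result (Theorem~\ref{thm:bollobas}) cannot be invoked directly.

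Next I would isolate the two binary-cross-entropy terms that carry the quantities being optimized. With $\kappa^*$, $\hat{\rho}_\psi$, and $\hat{\rho}_\Psi^{\text{prob}}$ held fixed, the only summand of $\mathcal L_{\text{prob}}$ in equation~\eqref{eq:loss_value} containing $\hat{\rho}_\Phi^{\text{prob}}$ is $\text{BCE}(I_{S(n,\hat{\rho}_\pi)}^{\text{LP-sat}}, \hat{\rho}_\Phi^{\text{prob}})$, and the only summand of $\mathcal L_{\text{threshold}}$ in equation~\eqref{eq:loss_threshold} containing $\hat{\rho}_\phi$ is $\text{BCE}(\hat{\rho}_\Phi^{\text{prob}}, \hat{\rho}_\phi)$. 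Reusing the elementary identity from the proposition, namely that $w \mapsto a\log w + b\log(1-w)$ attains its maximum at $w = a/(a+b)$, I would conclude that over a batch of $H$ samples the minimizer of $\mathcal L_{\text{prob}}$ in $\hat{\rho}_\Phi^{\text{prob}}$ is the empirical frequency $\frac{1}{H}\sum_{i=1}^H I_{S(n,\hat{\rho}_\pi)}^{\text{LP-sat},(i)}$, while the minimizer of $\mathcal L_{\text{threshold}}$ in $\hat{\rho}_\phi$ is $\frac{1}{H}\sum_{i=1}^H \hat{\rho}_\Phi^{\text{prob},(i)}$.

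Chaining these two identities yields the fixed-point relation holding at the joint global minimum, $\hat{\rho}_\phi \approx \frac{1}{H}\sum_{i=1}^H I_{S(n,\hat{\rho}_\pi)}^{\text{LP-sat},(i)} \approx \mathbb P(S(n, \hat{\rho}_\phi) \in \mathcal Q_{\kappa^*})$ for large enough $H$. Setting $\xi := \mathbb P(S(n, \hat{\rho}_\phi) \in \mathcal Q_{\kappa^*})$, this reads $\hat{\rho}_\phi = \xi$ with $\xi \in (0,1)$; I would then invoke Theorem~\ref{thm:bounded-increasing} with precisely this $\xi$ and define $q_{0,\kappa^*} := \xi$, which is a bona fide local threshold of the bounded monotone increasing property $\mathcal Q_{\kappa^*}$, exactly as $p_0 := \xi$ served as the threshold of $\mathcal P$ in the proposition.

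The hard part will not be the cross-entropy algebra, which is identical to the proposition, but verifying that the fixed point $\xi = \hat{\rho}_\phi$ actually lands inside the bounded domain $[0, p_0]$ on which the local threshold $q_{0,\kappa^*}$ is defined, rather than being clipped at the boundary where $\mathcal Q_{\kappa^*}$ ceases to be monotone. This is where I would lean on the standing assumptions that $\mathcal Q_{\kappa^*}$ is nontrivial and that $q_{0,\kappa^*} \lesssim p_0$ (equivalently $q_{0,\kappa^*} \ll p_0$), which together ensure that the LP-satisfiability transition occurs strictly below $p_0$ so that the empirical frequency $\xi$ is realized by an interior coverage value. With that check in place, the corollary follows by the same reasoning that produced $\hat{\rho}_\psi \approx p_0$ in Proposition~\ref{prop:global_optimum_psi}.
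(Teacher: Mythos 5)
Your proposal is correct and takes essentially the same approach as the paper: the paper states this corollary without a separate proof, the intended argument being exactly the mutatis mutandis repetition of Proposition \ref{prop:global_optimum_psi}'s proof that you carry out, with Lemma \ref{lemma:q} and Theorem \ref{thm:bounded-increasing} supplying the \emph{local} threshold $q_{0,\kappa^*}$ of the bounded monotone increasing property $\mathcal Q_{\kappa^*}$ in place of Theorem \ref{thm:bollobas}'s global threshold $p_0$, followed by the same BCE fixed-point identification ($\hat{\rho}_\phi = \xi$, choose $q_{0,\kappa^*} = \xi$). Your additional checks---that the fixed point lands inside the bounded domain $[0,p_0]$ where $\mathcal Q_{\kappa^*}$ is monotone, and the indexing wrinkle that the loss evaluates $I^{\text{LP-sat}}_{S(n,\hat{\rho}_\pi)}$ at coverage $\hat{\rho}_\pi$ rather than $\hat{\rho}_\phi$---are points the paper itself glosses over, and they do not alter the route.
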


We relax the result of Theorem \ref{thm:f} for large enough $n$.  
\begin{remark}
\label{rem:large_n_relaxation}
$q_{0, \kappa} \lesssim \rho \lesssim p_0$ implies $\mathbb P(S(n, \rho) \in \mathcal F_\kappa) \ge 1 - \epsilon$
\end{remark}

\begin{proof}[Proof of Theorem \ref{thm:restricted_search_space}]
First, we prove the following statement:
If the variable values in $S(n, \rho) \in \mathcal{F}_{\kappa^*}$ lead to $\Delta$-optimal solution and $\rho^*_\Delta = \arg\min\limits_\rho \bold c^\top \bold x(\tau, \rho)$, then $q_{0, \kappa^*} \lesssim \rho^*_\Delta \lesssim p_0$.  
To obtain a contradiction, suppose $\rho^*_\Delta = \max(\arg\min\limits_\rho \bold c^\top \bold x(\tau, \rho))$.  
Suppose also $\rho^*_\Delta \lesssim q_{0, \kappa^*}$ or $\rho^*_\Delta \gtrsim p_0$.  

1) $\rho^*_\Delta \gtrsim p_0$ implies $S(n, \rho^*_\Delta) \notin \mathcal P$ with high probability.
This contradicts $\rho^*_\Delta = \max(\arg\min\limits_\rho \bold c^\top \bold x(\tau, \rho))$.  

2) $\rho^*_\Delta \lesssim q_{0, \kappa^*}$ implies $S(n, \rho^*_\Delta) \notin \mathcal Q_{\kappa^*}$ with high probability.  
Since $q_{0, \kappa^*} \lesssim p_0$, $S(n, \rho^*_\Delta) \in \mathcal P$ with high probability.  
There exists $\rho'$, such that $S(n, \rho') \in \mathcal Q_{\kappa^*}$ and $\bold c^\top \bold x(\tau, \rho') \le \bold c^\top \bold x(\tau, \rho^*_\Delta)$, and $\rho' > \rho^*_\Delta$.  
This contradicts $\rho^*_\Delta = \max(\arg\min\limits_\rho \bold c^\top \bold x(\tau, \rho))$.  

By Proposition \ref{prop:global_optimum_psi},  
\begin{align}
\arg \min\limits_{\hat{\rho}_\psi} \mathcal L_{\text{threshold}} = p_0 \text{, for } \hat{\rho}_\phi \text{ and } \hat{\rho}_\Phi^{\text{prob}} \text{ fixed.}  
\end{align}
By Corollary \ref{cor:global_optimum_phi},  
\begin{align}
\arg \min\limits_{\hat{\rho}_\phi} \mathcal L_{\text{threshold}} = q_{0, \kappa^*}, \text{ for } \kappa^*, \hat{\rho}_\psi \text{ and } \hat{\rho}_\Psi^{\text{prob}} \text{ fixed.}   
\end{align}
By Remark \ref{rem:large_n_relaxation} for large enough $n$, 
\begin{align}
q_{0, \kappa^*} \lesssim \rho \lesssim p_0 \text{ implies } \mathbb P(S(n, \rho) \in \mathcal F_{\kappa^*}) \ge 1 - \epsilon
\end{align}
Thus, $S(n, \rho^*_\Delta) \in \mathcal F_{\kappa^*}$ with high probability, for $\rho^*_\Delta \in [q_{0, \kappa^*}, p_0]$.  
Therefore, we search $\rho^*_\Delta \in [q_{0, \kappa^*}, p_0]$ at the global optimum of $\mathcal L_{\text{threshold}}$, such that the cardinality of the search space becomes $(p_0 - q_{0, \kappa^*})|\mathcal A|$.  
\end{proof}

\subsection{Average Primal Bound over Time}
Figure \ref{fig:primal_over_time} shows the average primal bound curves for SCIP, ND, GNNExplainer, CF, and TaL.  
TaL shows the best curves in all test datasets except for maximum independent set problems.  
In Figure \ref{fig:facilities}, the SCIP curve is not shown as it is beyond the plotting range.  
In particular, TaL shows a substantially better curve for maritime inventory routing problems in Figure \ref{fig:anonymous} and \ref{fig:anonymous_no_augment}.  

\begin{figure}[hbt!]
  \centering
  \begin{subfigure}[c]{0.45\columnwidth}
      \includegraphics[width=\columnwidth]{./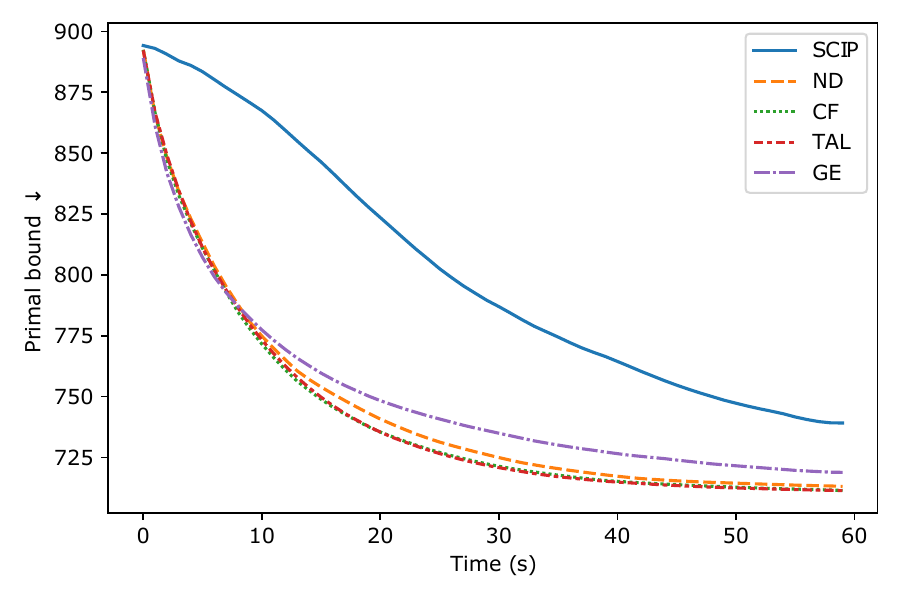}
      \caption{Workload apportionment test dataset.}
      \label{fig:load_balancing}
  \end{subfigure}
  \begin{subfigure}[c]{0.45\columnwidth}
      \includegraphics[width=\columnwidth]{./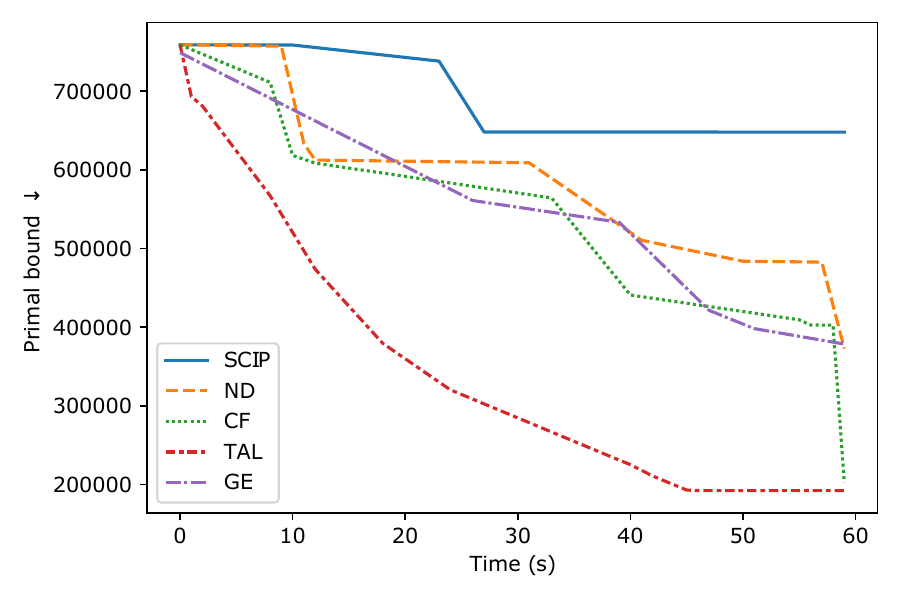}
      \caption{Maritime inventory routing (non-augmented) test dataset.}
      \label{fig:anonymous_no_augment}
  \end{subfigure}
  \begin{subfigure}[c]{0.45\columnwidth}
      \includegraphics[width=\columnwidth]{./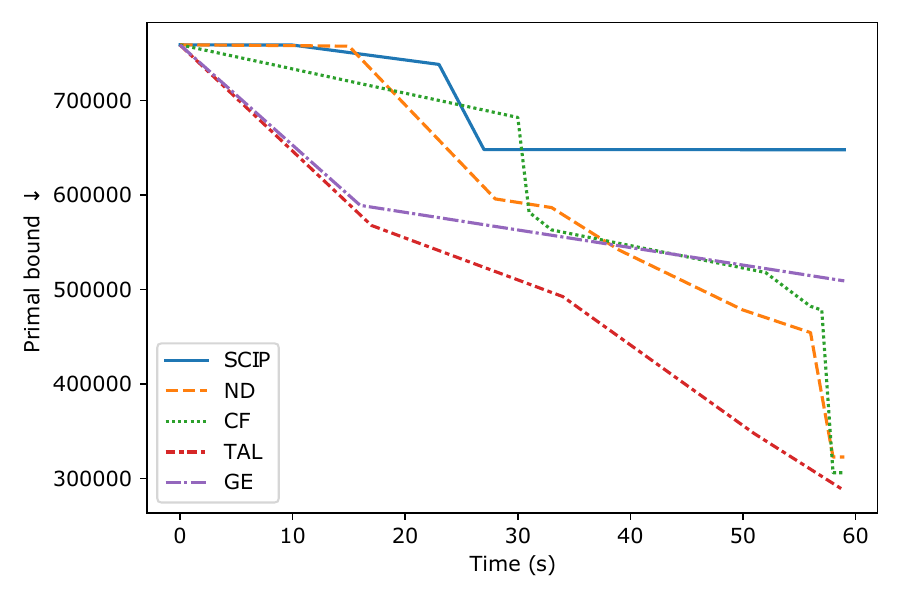}
      \caption{Maritime inventory routing (augmented) test dataset.}
      \label{fig:anonymous}
  \end{subfigure}
  \begin{subfigure}[c]{0.45\columnwidth}
      \includegraphics[width=\columnwidth]{./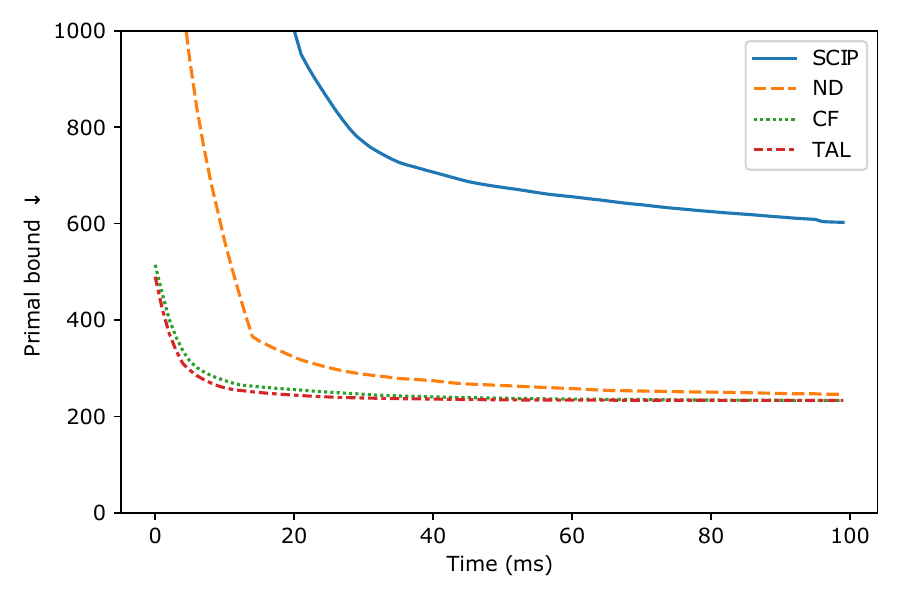}
      \caption{Set covering test dataset.}
      \label{fig:setcover}
  \end{subfigure}
  \begin{subfigure}[c]{0.45\columnwidth}
      \includegraphics[width=\columnwidth]{./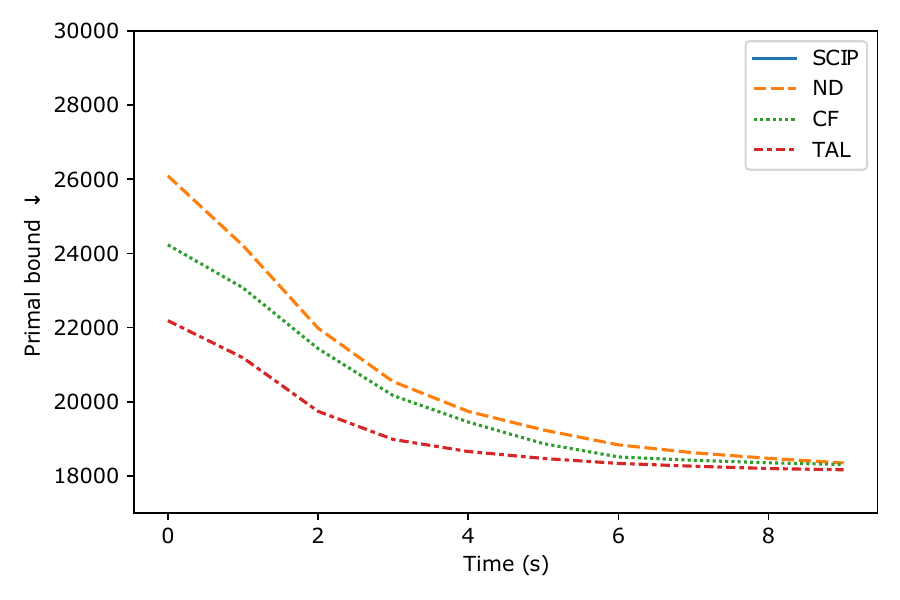}
      \caption{Capacitated facility flow test dataset.}
      \label{fig:facilities}
  \end{subfigure}
  \begin{subfigure}[c]{0.45\columnwidth}
      \includegraphics[width=\columnwidth]{./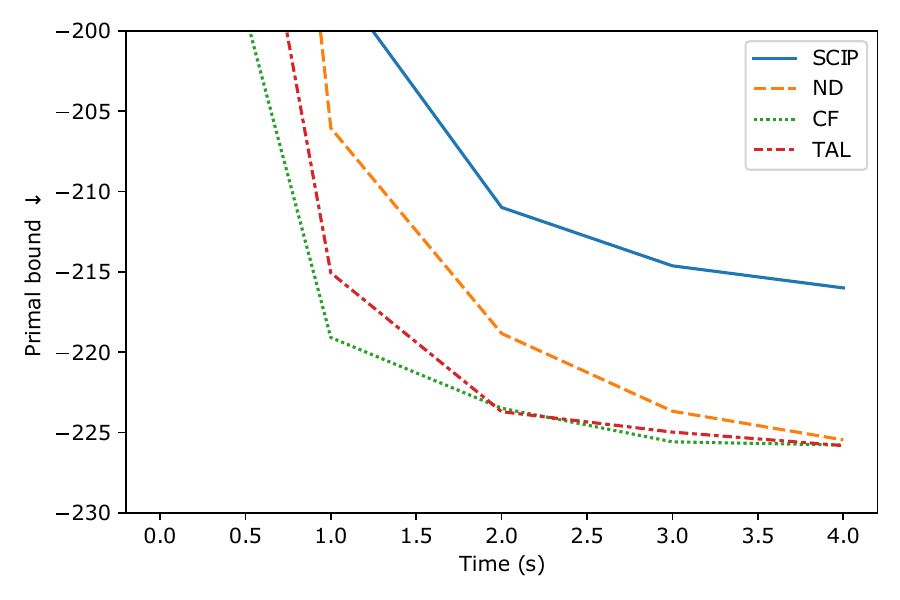}
      \caption{Maximum independent set test dataset.}
      \label{fig:indset}
  \end{subfigure}
  \caption{Average primal bound as a function of running time.  
  ND refers to Neural diving \citep{nair2020solving} and GE refers to GNNExplainer \citep{ying2019gnn}.  
  }
  \label{fig:primal_over_time}
\end{figure}

\subsection{Algorithm Details}
\label{apdx:algo}
\begin{algorithm}[hbt!]
\caption{ThresholdSolve\label{alg:threshold-solve}}
\hspace*{\algorithmicindent} \textbf{Input: } 
$M, \bold x, \hat{\rho}_\psi, \hat{\rho}_\phi, \hat{\rho}_\pi$
\begin{algorithmic}[1]
\STATE $\bar{\bold x} \gets \text{SolveLP}(M, \bold x, 0)$  
\STATE $\hat{\bold x}(\hat{\rho}_\pi) \gets \text{SolveLP}(M, \bold x, \hat{\rho}_\pi)$ 
\STATE $\bold x(\tau, \hat{\rho}_\pi) \gets \text{SolveMIP}(M, \bold x, \hat{\rho}_\pi, \tau)$
    \STATE $I_{S(n, \hat{\rho}_\psi)}^{\text{feas}} \gets 1$ if $S(n, \hat{\rho}_\psi)$ is feasible else $0$
    \STATE $\bold x(\tau, \hat{\rho}_\psi) \gets \text{SolveMIP}(M, \bold x, \hat{\rho}_\psi, \tau)$ if $S(n, \hat{\rho}_\psi)$ is feasible else $Null$
    \STATE $\bold x(\tau, \hat{\rho}_\phi) \gets \text{SolveMIP}(M, \bold x, \hat{\rho}_\phi, \tau)$ if $S(n, \hat{\rho}_\phi)$ is feasible else $Null$
    \STATE $\kappa \gets \hat{\rho}_\phi \cdot (\bold c^\top \bold x(\tau, \hat{\rho}_\phi) - \bold c^\top \bar{\bold x}) + \bold c^\top \bar{\bold x}$
        \STATE $I_{S(n, \hat{\rho}_\pi)}^{\text{LP-sat}} \gets 1$ if $\bold c^\top \hat{\bold x}(\hat{\rho}_\pi) \ge \kappa$ else $0$
    \STATE $\rho^* \gets \arg\min\limits_{\rho \in [\hat{\rho}_\psi, \hat{\rho}_\phi]} \bold c^\top \bold x(\tau, \rho)$ if $S(n, \min(\hat{\rho}_\psi, \hat{\rho}_\phi))$ is feasible else $Null$ 
\STATE \textbf{return } $I_{S(n, \hat{\rho}_\psi)}^{\text{feas}}, I_{S(n, \hat{\rho}_\pi)}^{\text{LP-sat}}, \rho^*$
\end{algorithmic}
\end{algorithm}

Algorithm \ref{alg:threshold-solve} describes the ThresholdSolve procedure at line 8 in Algorithm \ref{line:threshold_solve}.  
$\text{SolveLP}(M, \bold x, 0)$ refers to the process of solving LP-relaxation of $M$ to return the objective value of the solution.  
$\text{SolveLP}(M, \bold x, \hat{\rho}_\pi)$ is the process of solving LP-relaxation of the sub-MIP of $M$ by fixing variables with coverage $\hat{\rho}_\pi$ and values $\bold x$.  
Since the objective term is unbounded, we set $\kappa \in (\bold c^\top \bar{\bold x}, \bold c^\top \bold x(\tau, \hat{\rho}_\pi))$, and adjust $\kappa$ to $\kappa^*$.  

\subsection{Experiment Details}
\label{apdx:exp_detail}
\paragraph{Training}
The base ND GNN models are trained on $4\times$ NVIDIA Tesla V100-32GB
GPUs in parallel.  
The ND with SelectiveNet models are trained on $4\times$ NVIDIA Tesla A100-80GB GPUs in parallel.  
The TaL GNN models are trained on a single NVIDIA Tesla A100-80GB GPU.  
The training times for each dataset and method are as follows.  
Note that the CPU load is higher than the GPU load in TaL.
Also, the TaL column in Table \ref{apdx:training_times} refers to the post-training phase only.  
It is noticeable that the post-training session does not exceed 2 hours for all datasets.  

\begin{table}[hbt!]
  \caption{Training times over the methods for each dataset}
  \label{apdx:training_times}
  \resizebox{\linewidth}{!}{
  \centering
  \begin{tabular}{lccc}
    \toprule
        & \multicolumn{1}{c}{ND base} & \multicolumn{1}{c}{ND w/ SelectiveNet} & \multicolumn{1}{c}{TaL} \\
    \midrule
    Workload Apportionment & $\approx 5$ mins & $\approx 3$ hrs & $\approx$ 1 hr 10 mins \\
    Maritime Inventory Routing (non-augmented) & $\approx 30$ mins & $\approx 2$ hrs $40$ mins & $\approx 10$ mins \\
    Maritime Inventory Routing (augmented) & $\approx 50$ hrs & $\approx 25$ mins & $\approx 1$ hr $10$ mins \\
    Set Covering & $\approx 6$ hrs & $\approx 2$ hrs $10$ mins & $\approx 1$ hr \\
    Capacitated Facility Flow & $\approx 5$ hrs $30$ mins & $\approx 12$ hrs & $\approx 5$ mins \\
    Maximum Independent Set & $\approx 4$ hrs $50$ mins & $\approx 1$ hr & $\approx 5$ mins \\
    \bottomrule
  \end{tabular}
  }
\end{table}

\paragraph{Evaluation}
We apply ML-based methods over presolved models from SCIP, potentially with problem transformations.  
Throughout the experiments, we are using “aggressive heuristic emphasis” mode in SCIP, where it tunes the parameters of SCIP to focus on finding feasible solutions.  

\paragraph{Adjusting CF cutoff value}
In Figure \ref{fig:threshold-behavior-optimality}, there is a convex-like region between the confidence score 0.4 and 0.5.  
We manually find such region and conduct binary search over the region.

\paragraph{Dataset}
The workload apportionment dataset \citep{gasse2022machine} involves problems for distributing workloads, such as data streams, among the least number of workers possible, such as servers, while also ensuring that the distribution can withstand any worker's potential failure.  
Each specific problem is represented as a MILP and employs a formulation of bin-packing with apportionment.  
The dataset comprises 10,000 training instances, which have been separated into 9,900 training examples and 100 validation examples.

The non-augmented maritime inventory routing dataset \citep{gasse2022machine, papageorgiou2014mirplib} consists of problems that are crucial in worldwide bulk shipping.  
The dataset consists of 118 instances for training, which were split into 98 for training and 20 for validation.
Augmented maritime inventory routing dataset includes additional 599 easy to medium level problems from MIPLIB 2017 \citep{gleixner2021miplib} on top of the non-augmented maritime inventory routing training set.  

The set covering instances \citep{balas1980set, gasse2019exact} are formulated as \citet{balas1980set} and includes 1,000 columns.  
The set covering training and testing instances contain 500 rows.  
The capacitated facility location problems \citep{cornuejols1991comparison, gasse2019exact} are generated as \citet{cornuejols1991comparison}, and there are 100 facilities (columns).  
The capacitated facility location training and testing instances contain 100 customers (rows).  
The maximum independent set problems \citep{bergman2016decision, gasse2019exact} are created on Erdos-Rényi random graphs, formulated as \citet{bergman2016decision}, with an affinity set at 4.  
The maximum independent set training and testing instances are composed of graphs with 500 nodes.

\end{document}